\numberwithin{equation}{section}
\newtheorem{lem}{Lemma}[section]%
\newtheorem{theorem}[lem]{Theorem}%
\newtheorem{prop}[lem]{Proposition}%
\def\a{\alpha}
  \def\G{\Gamma}
\def\nd{\mathrel{\bigm|\kern-.7em/}}
\def\f{\noindent}
\def\Aut{\hbox{\rm Aut}}
\def\Cay{\hbox{\rm Cay}}
\def\mz{{\mathbb Z}}
\begin{document}
\title[]{On $n$-partite digraphical representations of finite groups}

\author{Jia-Li Du}
\address{Jia-Li Du, School of Mathematics, China University of Mining and Technology, Xuzhou 221116, China}
\email{dujl@cumt.edu.cn}

\author{Yan-Quan Feng*}
\address{Yan-Quan Feng, Department of Mathematics, Beijing Jiaotong University, Beijing 100044, China}
\email{yqfeng@bjtu.edu.cn}

\author{Pablo Spiga*}
\address{Pablo Spiga, Dipartimento di Matematica e Applicazioni, University of Milano-Bicocca, Via Cozzi 55, 20125 Milano, Italy}
\email{pablo.spiga@unimib.it}

\footnotetext[1]{Corresponding authors}
\date{}
 \maketitle

\begin{abstract}

A group $G$ admits an \textbf{\em $n$-partite digraphical representation} if there exists a regular $n$-partite digraph $\Gamma$ such that the automorphism group $\mathrm{Aut}(\Gamma)$ of $\Gamma$ satisfies the following properties:
 \begin{itemize}
 \item [\rm(1)]$\mathrm{Aut}(\Gamma)$ is isomorphic to $G$,
 \item [\rm(2)]$\mathrm{Aut}(\Gamma)$ acts semiregularly on the vertices of $\Gamma$ and
 \item [\rm(3)]the orbits of $\mathrm{Aut}(\Gamma)$ on the vertex set of $\Gamma$ form a partition into $n$ parts giving a structure of $n$-partite digraph to $\Gamma$.
 \end{itemize}

  In this paper, for every positive integer $n$, we classify the finite groups admitting an $n$-partite digraphical representation.

\bigskip
\f {\bf Keywords:} Semiregular group, regular representation, DRR, D$n$SR, $n$-PDR.

\medskip
\f {\bf 2010 Mathematics Subject Classification:} 05C25, 20B25.
\end{abstract}

\section{Introduction}

We start by summarising some basic definitions. A \textbf{\em digraph} $\Gamma$ is an ordered pair $(V\Gamma, A\Gamma)$, where $V\Gamma$ is a non-empty set and $A\Gamma$ is a subset of  $V\Gamma \times V\Gamma$. It is customary to refer to $V\Gamma$ and to $A\Gamma$ as the \textbf{{\em vertex set}} and the \textbf{{\em arc set}} of $\Gamma$, respectively, and to refer to their elements as \textbf{\em vertices} and \textbf{\em arcs}. Given an arc $(u,v)\in A\Gamma$, $v$ is an \textbf{{\em out-neighbour}} of $u$ and $u$ is an \textbf{{\em in-neighbour}} of $v$. The \textbf{{\em out-valency}} and the \textbf{{\em in-valency}} of $v\in V\Gamma$ is the number of out-neighbours and the number of in-neighbours of $v$. Moreover, $\Gamma$ is \textbf{\em regular} if there exists a non-negative integer $d$ such that every vertex $v\in V\Gamma$ has out-valency and in-valency $d$.
Given a subset $X$ of $V\Gamma$, we denote by $\Gamma[X]:=(V\Gamma\cap X,A\Gamma\cap (X\times X))$ the \textbf{\em sub-digraph induced} by $\Gamma$ on $X$.

The digraph $\Gamma$  is a \textbf{\em graph} if $A\Gamma$ is symmetric, that is, $A\Gamma=\{(u,v)\ |\ (v,u)\in A\Gamma \}$.

An \textbf{{\em automorphism}} of $\Gamma$ is
a permutation $\sigma$ of $V\Gamma$ such that, for every $(u,v)\in V\Gamma\times V\Gamma$, $(u^\sigma , v^\sigma ) \in A\Gamma$ if and only if $(u, v) \in A\Gamma$. The \textbf{{\em automorphism group}} $\mathrm{Aut}(\Gamma)$ of $\Gamma$ is the set of all automorphisms of $\Gamma$ and it  is indeed a group under composition of permutations.

Let  $G$ be a permutation group on a set $\Omega$ and let $\omega \in \Omega$. We let $G_\omega$ denote the \textbf{{\em stabiliser}} of $\omega$ in $G$, that is, the subgroup
of $G$ fixing $\omega$. We say that $G$ is \textbf{\em semiregular} if $G_\omega = 1$ for every $\omega \in \Omega$, and \textbf{\em regular} if it is semiregular and transitive. We denote by $\mathbb{Z}_k$ the cyclic group of order $k\in\mathbb{N}\setminus\{0\}$ and by $Q_8$ the quaternion group of order $8$.

We are now ready to give the definition of the main player in this paper. Let $G$ be a group and let $n$ be a positive integer. An \textbf{\em $n$-partite digraphical representation}
($n$-$\rm PDR$ for short) of $G$ is a digraph $\Gamma=(V\Gamma,A\Gamma)$ such that
 \begin{itemize}
 \item $\Gamma$ is regular,
 \item $\mathrm{Aut}(\Gamma)$ is isomorphic to $G$,
 \item $\mathrm{Aut}(\Gamma)$ acts semiregularly on $V\Gamma$,
 \item  $\mathrm{Aut}(\Gamma)$ has $n$ orbits on $V\Gamma$ and, for every orbit $X$, the sub-digraph $\Gamma[X]$ induced by $\Gamma$ on $X$ is the empty graph.
 \end{itemize}
Observe that the last condition implies that $\Gamma$ is $n$-partite. The scope of this paper is to classify finite groups admitting $n$-PDRs.
\begin{theorem}\label{theo=main}
Let $G$ be a finite group and let $n$ be a positive integer. Then $G$ admits no $n$-partite digraphical representation if and only if one of the following occurs:
\begin{enumerate}
\item\label{part1} $n=1$ and $|G|\geq 3$,
\item\label{part2} $n=2$ and $G\cong \mz_1$, $\mz_2$, $\mz_3$, $\mz_2\times \mz_2$ or $\mz_2\times \mz_2\times \mz_2$;
\item\label{part3} $3\leq n\leq 5$ and $G\cong \mz_1$.
\end{enumerate}
\end{theorem}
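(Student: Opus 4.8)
The plan is to reformulate an $n$-PDR as a system of Cayley-type connection sets and then to split the argument into an existence half (constructions for all non-exceptional pairs $(G,n)$) and a non-existence half (the three listed families). Fix a semiregular copy $G_R$ of $G$ acting by right multiplication on $V\Gamma=\{1,\dots,n\}\times G$, so that the $n$ parts are the sets $\{i\}\times G$. Every $G_R$-invariant arc set between part $i$ and part $j$ (with $i\neq j$) is then encoded by a connection set $S_{ij}\subseteq G$, an arc joining $(i,h)$ to $(j,k)$ precisely when $kh^{-1}\in S_{ij}$, and the empty-part condition $\Gamma[\{i\}\times G]$ empty is exactly $S_{ii}=\emptyset$. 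Since $G_R\le\Aut(\Gamma)$ always, and since $\Aut(\Gamma)\cong G$ together with $G_R\le\Aut(\Gamma)$ forces $\Aut(\Gamma)=G_R$ in the finite setting, the task reduces to choosing the tuple $(S_{ij})_{i\neq j}$ so that (a) no automorphism permutes the parts nontrivially and (b) the subgroup of $\Aut(\Gamma)$ fixing every part setwise is exactly $G_R$. The case $n=1$ is immediate: the only admissible digraph is the empty digraph on $|G|$ vertices, whose automorphism group is $\Sym(|G|)$, and this is regular on $|G|$ points if and only if $|G|\le 2$, establishing case \eqref{part1}.

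For the existence half I would construct, for every $(G,n)$ outside the exceptional list, an explicit tuple $(S_{ij})$ realising $\Aut(\Gamma)=G_R$. The guiding idea is to use known digraphical representations as building blocks while making the parts mutually distinguishable, so that conditions (a) and (b) hold at once. Concretely, I would devote one ordered pair of parts to a connection set whose stabiliser in the holomorph of $G$ is trivial, which pins down $G_R$ internally in the spirit of Babai's theorem on digraphical regular representations, and then perturb the remaining connection sets so that the $n$ parts acquire pairwise distinct local invariants (for instance distinct in/out-valency patterns toward a fixed distinguished part), which destroys every part-permuting symmetry. When $|G|$ or $n$ is large this can be pushed through by a counting or probabilistic argument showing that a generic tuple admits no extra automorphism; the finitely many boundary pairs $(G,n)$ lying just outside the exceptional families are then settled by direct, possibly computer-assisted, construction.

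For the non-existence half I would treat the two remaining families separately. For $n=2$ and $G\in\{\mz_1,\mz_2,\mz_3,\mz_2\times\mz_2,\mz_2\times\mz_2\times\mz_2\}$ the relevant objects are the directed bi-Cayley digraphs on $G$, governed by just two connection sets $S=S_{12}$ and $T=S_{21}$. Because these groups are small and, in the elementary abelian cases, carry a very large automorphism group $\mathrm{GL}_k(2)$ normalising $G_R$ in the holomorph, I would show that for every choice of $(S,T)$ the digraph admits a symmetry beyond $G_R$, namely either a part-swap combined with inversion or a nontrivial element of $\Aut(G)$ fixing both $S$ and $T$, so that $\Aut(\Gamma)\neq G_R$; this finite verification I would organise conceptually by exploiting the abundance of automorphisms of $\mz_2^k$ and the self-paired structure forced by involutions. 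For $G\cong\mz_1$ and $3\le n\le5$, an $n$-PDR is exactly a loopless regular digraph on $n$ vertices with trivial automorphism group, and I would rule these out via the finite classification of regular digraphs on at most five vertices: each is the empty or complete digraph, a disjoint union of directed cycles, or a regular tournament (or a complement of one of these), and in every such case a nontrivial automorphism is present, whereas the complementary rigid regular digraphs for $n\ge6$ belong to the existence half. This yields cases \eqref{part2} and \eqref{part3}.

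The step I expect to be the main obstacle is guaranteeing the equality $\Aut(\Gamma)=G_R$ in the constructions: one must simultaneously suppress the internal part-fixing symmetries, as in a digraphical regular representation, and the external part-permuting symmetries, and these two requirements interact most delicately exactly for the small groups and small $n$ bordering the exceptional list. Delimiting precisely where the general construction first succeeds, and confirming by hand that the groups on the boundary do admit an $n$-PDR while those in the list do not, is where the bulk of the careful case analysis will lie.
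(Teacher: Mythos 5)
Your framework (encoding an $n$-PDR as an $n$-Cayley digraph with connection sets $S_{ij}$, reducing to $\Aut(\Gamma)=G_R$, and the $n=1$ analysis) matches the paper, but the core of the theorem is not actually proved. The decisive gap is in the existence half: for each fixed small group such as $\mz_2$, $\mz_3$, $\mz_2^2$, $\mz_2^3$ there are infinitely many values $n\ge 3$ to handle, so these are not ``finitely many boundary pairs'' that can be settled by computer, and a probabilistic or counting argument cannot work there because the connection sets live in a bounded group and have no room to be generic. What is missing is a uniform-in-$n$ mechanism. The paper's key device is a reduction lemma: if $G$ admits a $2$-PDR $\Cay(G,R\cup\{1\},L\cup\{1\})$, one builds an $n$-PDR by installing that $2$-PDR between parts $0$ and $1$ and joining each consecutive pair of parts $i,i+1$ with $i\ne 0$ by connection sets whose intersection pattern forces a different number of \emph{undirected} edges at vertices of $G_0\cup G_1$ than at vertices of the other parts. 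This forces every automorphism to stabilise $G_0\cup G_1$ setwise, whence it restricts to $G$ there by the $2$-PDR property, and pointwise rigidity then propagates along the chain of parts $G_2,G_3,\ldots$ because consecutive layers are near-matchings. Your proposal gestures at ``distinct local invariants'' but never exhibits such a construction nor verifies that no extra automorphism survives, which you yourself identify as the main obstacle; that obstacle is the theorem.

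There are also concrete errors and unproved claims in the non-existence half. For $n=2$ the assertion that $\mz_2^2$ and $\mz_2^3$ admit no $2$-PDR is a genuine theorem (the paper imports it from its Haar digraphical representation classification); your sketch via ``abundance of automorphisms of $\mz_2^k$'' is not a proof, since one must show that \emph{every} pair $(S,T)$ of connection sets is fixed by some nontrivial automorphism or part-swap. For $G\cong\mz_1$ and $3\le n\le 5$, your claimed classification of regular digraphs on at most five vertices is false as stated: a $d$-regular digraph with $d\ge 2$ need not be an empty or complete digraph, a disjoint union of directed cycles, a regular tournament, or a complement of one of these (already on five vertices with in- and out-valency $2$ there are other examples). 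The correct fact --- every regular digraph on at most five vertices has a nontrivial automorphism, while rigid regular digraphs exist on $n\ge 6$ vertices --- is what the paper cites from its classification of digraphical $n$-semiregular representations, and it requires an actual verification rather than the classification you invoke. Finally, the groups $\mz_2$ and $\mz_3$ with $n\ge 3$ sit outside your exceptional list and therefore need explicit $n$-PDRs for all $n\ge 3$; these again demand the uniform chain constructions described above.
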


We now discuss the roots of our interest on Theorem~\ref{theo=main}.
The study of group representations on digraphs starts with the classical work on DRRs. A \textbf{\em digraphical regular representation} of $G$ is a digraph $\Gamma=(V\Gamma,A\Gamma)$ such that $\Aut(\G)\cong G$ and $\Aut(\G)$ is regular on $V\G$.  Babai~\cite{Babai} has proved that, except for
$$
Q_8,\,
\mz_2\times \mz_2,\,
\mz_2\times \mz_2\times \mz_2,\,
\mz_2\times \mz_2\times \mz_2\times \mz_2\,\hbox{ and }
\mz_3\times \mz_3,$$ every finite group admits a DRR. This classification plays a crucial role in our work. Observe that the case $n=1$ in Theorem~\ref{theo=main} is trivial. Indeed, the last condition in the definition of $n$-PDR implies and a $1$-PDR is an empty graph. Therefore, a group having order at least $3$ does not admit a $1$-PDR, whereas $\mz_1$ and $\mz_2$ do admit a $1$-PDR.

After the classification of finite groups admitting a DRR was completed, researchers proposed and investigated various natural generalisations. For instance, Babai and Imrich~\cite{BabaiI} classified finite groups admitting a tournament regular representation. Morris and Spiga~\cite{MorrisSpiga1,MorrisSpiga3,Spiga}, answering a question of Babai~\cite{Babai}, classified the finite groups admitting an oriented regular representation.
For more results, generalising DRRs in various directions, we refer to
\cite{DSV,DFS,DFS1,MSV,Spiga2,Xiaf}.
To give some more context to Theorem~\ref{theo=main}, we give  details to two more particular generalisations.

Given a positive integer $n$, a \textbf{{\em digraphical $n$-semiregular representation}} ({\em \rm D$n$SR} for short) of $G$ is a regular digraph $\G=(V\Gamma,A\Gamma)$ such that $\Aut(\G)\cong G$ is semiregular on $V\G$ with $n$ orbits. Observe that every $n$-PDR is also a D$n$SR, but not every D$n$SR is necessarily a $n$-PDR because it may not be $n$-partite. Therefore, Theorem~\ref{theo=main} can be seen as an extension of the classification in~\cite{DFS} of the finite groups admitting a D$n$SR. The second generalisation we discuss is concerned with Haar digraphs. Given a group $G$, a \textbf{{\em Haar digraph}} $\Gamma$ over $G$ is a bipartite digraph having a bipartition $\{X,Y\}$ such that $G$ is a group of automorphisms of $\Gamma$ acting regularly on $X$ and on $Y$. We say that $\Gamma$ is a \textbf{\em Haar digraphical representation} (HDR for short) of $G$, if there exists a regular Haar digraph over $G$ such that its automorphism group is isomorphic to $G$. We have proved in~\cite{DFS2} that, except for
$$\mz_1,\, \mz_2,\, \mz_3,\, \mz_2\times \mz_2\, \hbox{ and }\mz_2\times \mz_2\times \mz_2,$$
every finite group admits a HDR.
Observe that a digraph $\Gamma$ is a  HDR of $G$ if and only if $\Gamma$ is a $2$-PDR of $G$. Therefore, $n$-PDRs offer also a natural generalisation of HDRs. Furthermore, the classification in~\cite{DFS2} of finite groups not admitting HDRs gives a classification of finite groups not admitting $2$-PDRs. In particular, Theorem~\ref{theo=main}~\eqref{part2} follows from~\cite{DFS2}. Thus, in the proof of Theorem~\ref{theo=main}, we may suppose that
\begin{align}\label{boundn}n\ge 3.
\end{align}

 Despite the natural argument used by Babai for the classification of finite groups admitting a DRR, the classification of finite groups admitting a GRR has required considerable more work. Indeed, as a rule-of-thumb, representation results dealing with undirected graphs are difficult. For instance, 
 we classify the finite groups $G$ having an abelian subgroup $A$ of index $2$ admitting a bipartite DRR having bipartition $\{A,G\setminus A\}$. The analogous classification for Cayley graphs is much harder, see~\cite{DFS1,DFS3}.
  In fact, we do not have a classification of finite groups admitting a Haar graphical representation and hence we have no classification of finite groups admitting an $n$-partite graphical representation, when $n\ge2$. Incidentally, the case $n=2$ is in our opinion the most important. In this context, we believe that the classification of finite groups admitting a bipartite regular representation can be of some relevance, see the introductory section in~\cite{DFS1} for more details.

We conclude this section observing that the work of Grech and Kisielewicz~\cite{G1,G2,G3}, which in spirit is trying to give an explicit description of some classes of permutation groups that are $2$-closed, does fit within the subject of group representations on digraphs. In particular, some of the results of Grech and Kisielewicz are interesting in the context of DRRs and their generalisations.
\section{Preliminaries and notations}

Let $n$ be a positive integer and let $G$ be a finite group. Consistently throughout the whole paper, for not making our notation too cumbersome to use, we denote the element $(g,i)$ of the cartesian product $G\times\{0,\ldots,n-1\}$ simply by $g_i$. We often identify $\{0,\ldots,n-1\}$ with $\mathbb{Z}_n$, that is, with the integers modulo $n$.

For every $i,j\in\mathbb{Z}_n$, let $T_{i,j}$ be a subset of $G$. The \textbf{{\em $n$-Cayley digraph}} of $G$ with respect to $(T_{i,j}: i,j\in\mz_n)$ is the digraph with vertex set $$G\times \mathbb{Z}_n=\bigcup_{i\in\mz_n} G_i,$$ where $G_i=\{g_i\ |\ g\in G\}$, and with arc set  $$\bigcup_{i,j} \{(g_i, (tg)_j)~|~t\in T_{i,j},g\in G\}.$$
We denote this digraph by  $$\Cay(G,T_{i,j}: i,j\in\mz_n).$$ Observe that, when $n:=1$, $1$-Cayley digraphs are nothing more and nothing less than Cayley digraphs. Similarly, when $n:=2$, $2$-Cayley digraphs are also known as BiCayley digraphs in the literature. When dealing with Cayley digraphs, we omit the subscript, that is, we denote the Cayley digraph of $G$ with connection set $S$ with $\Cay(G,S)$.

Set $\Gamma:=\Cay(G,T_{i,j}: i,j\in\mz_n)$. For every $g\in G$, the mapping
\begin{align*}
R_n(g):\,& V\Gamma\to V\Gamma\\
&x_i\mapsto (xg)_i
\end{align*}
 is an automorphism  of $\Gamma$. Thus $\{R_n(g)\ |\ g\in G\}$ is a subgroup of $\Aut(\Gamma)$ isomorphic to $G$. Since we are not interested in $G$ as an abstract group, but rather as a group of automorphisms of digraphs, for convenience, we identify $\{R_n(g)\ |\ g\in G\}$ with $G$. Observe that
\begin{itemize}
\item $G$ acts semiregularly on $V\Gamma=G\times \mathbb{Z}_n$,
\item $G$ has $n$ orbits on $V\Gamma$ and  these are $G_i$, with  $i\in\mathbb{Z}_n$.
\end{itemize}

 It is easy to see that a digraph $\Delta$ is an $n$-Cayley digraph of $G$ if and only if $\mathrm{Aut}(\Delta)$ contains a semiregular subgroup isomorphic to $G$ and with $n$ orbits on $V\Delta$. (When $n:=1$, this is a classic observation of Sabidussi~\cite[Lemma~4]{Sabidussi}; for the proof of the general case, it suffices to follow the argument of Sabidussi as a crib.)

 In our work we need the following two results concerning DRRs: Proposition~\ref{prop=DRR} is the classification of Babai~\cite[Theorem 2.1]{Babai} of finite groups admitting a DRR which we mentioned in the introduction, Proposition~\ref{prop=CayleySet} is a technical result proved in~\cite[Lemma 3.5]{DFS2}.

\begin{prop}\label{prop=DRR}
A finite group $G$ admits a $\mathrm{DRR}$ if and only if $G$ is not isomorphic to one of the following five groups: $Q_8$, $\mz_2^2$, $\mz_2^3$, $\mz_2^4$ and $\mz_3^2$.
\end{prop}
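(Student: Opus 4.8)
The plan is to prove the two directions of the equivalence separately, clearing the five exceptional groups by a finite inspection and establishing the generic case by the counting strategy originally due to Babai. Throughout, I would identify a DRR of $G$ with a connection set $S\subseteq G\setminus\{1\}$ such that the only automorphism of $\Cay(G,S)$ fixing the vertex $1$ is the identity: since the right-regular copy of $G$ already acts regularly on the vertices, the condition $\Aut(\Cay(G,S))=G$ is equivalent to the stabiliser of $1$ being trivial. With the arc convention of the paper, a permutation $\sigma$ of $G$ with $1^\sigma=1$ is an automorphism of $\Cay(G,S)$ precisely when $yx^{-1}\in S\iff y^\sigma(x^\sigma)^{-1}\in S$ for all $x,y\in G$.

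For the necessity of the exceptions, I would argue as follows. In each of $\mz_2^2$, $\mz_2^3$ and $\mz_2^4$ every non-identity element is an involution, so $S=S^{-1}$ for every connection set and hence every Cayley digraph of such a group is an undirected Cayley graph; one then checks that for these small groups no connection set produces a trivial vertex-stabiliser. The groups $\mz_3^2$ and $Q_8$ are likewise small enough that the absence of a DRR can be confirmed by examining all connection sets up to the action of $\Aut(G)$. This is routine case analysis, so I would only record the outcome rather than the enumeration.

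The substance lies in the sufficiency: every finite group outside the list admits a DRR. I would choose $S\subseteq G\setminus\{1\}$ uniformly at random, giving $2^{|G|-1}$ choices, and bound the number of \emph{bad} sets $S$ for which some non-identity permutation $\sigma$ of $G$ with $1^\sigma=1$ lies in $\Aut(\Cay(G,S))$. For a fixed such $\sigma$, the arc-preservation condition above forces $S$ to be a union of classes of the equivalence relation on $G\setminus\{1\}$ generated by the pairings $yx^{-1}\mapsto y^\sigma(x^\sigma)^{-1}$; this yields a bound of the shape $2^{|G|-1-c(\sigma)}$ for the number of compatible $S$, where $c(\sigma)$ measures how many classes are forced to merge and grows with the support of $\sigma$. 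Summing these bounds over all relevant $\sigma$ and showing the total stays strictly below $2^{|G|-1}$ then guarantees a DRR.

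The main obstacle is precisely this summation. There are up to $(|G|-1)!$ permutations fixing $1$, vastly more than the $2^{|G|-1}$ connection sets, so a naive union bound over all $\sigma$ cannot work. The delicate point is to exploit the full rigidity of arc-preservation, rather than only the weaker invariance $S^\sigma=S$ coming from the out-neighbours of $1$, and to organise the permutations according to the partition of $G$ into the orbits of $\langle\sigma\rangle$, so that permutations of larger support, although more numerous, are penalised by a correspondingly larger $c(\sigma)$. Calibrating these estimates so that they beat the factorial for all sufficiently large $|G|$ is the technical core of the argument; the finitely many remaining groups below the resulting threshold, other than the five listed exceptions, would then be dispatched by direct verification.
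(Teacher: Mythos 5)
This proposition is not proved in the paper at all: it is Babai's classification of groups admitting a DRR, and the paper simply cites it as \cite[Theorem 2.1]{Babai} and uses it as a black box. So the only question is whether your blind proposal would constitute a proof of Babai's theorem, and as written it does not. Your treatment of the five exceptional groups is fine in principle (a finite, if tedious, verification, helped by the observation that every Cayley digraph of $\mz_2^k$ is undirected). The problem is the sufficiency direction, which is the entire substance of the theorem. You set up the counting framework, you correctly identify that a naive union bound over the up to $(|G|-1)!$ permutations fixing $1$ cannot beat the $2^{|G|-1}$ connection sets, and then you declare that ``calibrating these estimates so that they beat the factorial'' is the technical core --- without carrying it out. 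That calibration is not a routine refinement: nothing in your sketch shows that the quantity $c(\sigma)$ grows fast enough with the support of $\sigma$ to control the sum, and indeed for arbitrary $\sigma$ it need not. A proof that stops at ``the hard estimate remains to be done'' is a plan, not a proof, and here the gap sits exactly where the theorem lives.

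It is also worth noting that this is not how Babai actually proves the result. His argument is an explicit construction, not a probabilistic one: starting from a carefully chosen (minimal) generating set he builds a concrete connection set $S$ whose local structure lets one recognise the identity vertex, so that the stabiliser of $1$ in $\mathrm{Aut}(\Cay(G,S))$ is forced to be trivial, with a handful of small groups handled separately. The constructive route avoids the union-bound difficulty entirely and, unlike an asymptotic counting argument, does not leave behind a possibly enormous finite list of groups ``below the threshold'' to be dispatched by direct verification --- a list which, in your approach, you have no control over because you have not produced the threshold. If you want to reprove the proposition rather than cite it, you should follow the constructive line; if a counting argument is preferred, the missing estimate must actually be supplied, and that is a substantial piece of work.
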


\begin{prop}\label{prop=CayleySet}
Let $G$ be a finite group of order at least $4$ admitting a $\mathrm{DRR}$.
Then $G$ has a subset $R$ such that $\Cay(G,R)$ is a $\mathrm{DRR}$, where $1\notin R$ and $|R|<(|G|-1)/2$.
\end{prop}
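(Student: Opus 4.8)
The plan is to exploit the fact that a loopless digraph and its complement share the same automorphism group. Recall that for a subset $R\subseteq G$ with $1\notin R$, the complement of $\Cay(G,R)$ (obtained by replacing each arc between distinct vertices by a non-arc) is again a Cayley digraph, namely $\Cay(G,G\setminus(R\cup\{1\}))$; since a permutation of the vertex set preserves the arc set if and only if it preserves its complement, we have $\Aut(\Cay(G,R))=\Aut(\Cay(G,G\setminus(R\cup\{1\})))$. In particular, $\Cay(G,R)$ is a DRR if and only if $\Cay(G,\bar R)$ is, where $\bar R:=G\setminus(R\cup\{1\})$, and $|R|+|\bar R|=|G|-1$.

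By hypothesis $G$ admits a DRR, say $\Cay(G,S)$; since deleting all loops does not change the automorphism group of a vertex-transitive digraph, we may assume $1\notin S$. Replacing $S$ by $\bar S$ if necessary, we may assume $|S|\le (|G|-1)/2$. If $|G|$ is even, then $(|G|-1)/2$ is not an integer, so $|S|\le (|G|-2)/2<(|G|-1)/2$ and we may simply take $R:=S$. Thus the only remaining case is that in which $|G|$ is odd and $|S|=|\bar S|=(|G|-1)/2$; here complementation alone produces exactly the boundary value, and some genuinely new input is required.

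For this balanced odd case the strategy is a single-element perturbation. I would aim to show that for at least one of the two DRRs $\Cay(G,S)$, $\Cay(G,\bar S)$ one can delete a suitable element of its connection set and still retain a DRR: if $\Cay(G,S\setminus\{s\})$ is a DRR for some $s\in S$, then $R:=S\setminus\{s\}$ satisfies $1\notin R$ and $|R|=(|G|-3)/2<(|G|-1)/2$, as required. To establish the existence of such an $s$, I would argue by contradiction. If every single deletion from $S$ (and, symmetrically, from $\bar S$) introduced a non-trivial automorphism fixing the identity vertex, each such automorphism would have to respect the out-neighbourhood structure of the smaller digraph while violating that of the DRR $\Cay(G,S)$; I would then extract from the DRR-property of $\Cay(G,S)$ enough rigidity to show that these constraints cannot all hold once $|S|\ge 2$, that is, once $|G|\ge5$.

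The main obstacle is precisely this last step: controlling the extra automorphisms created by a single deletion. Proving that at least one deletion is harmless is the technical heart of the statement, and it is where the careful stability analysis behind the cited Lemma~3.5 of~\cite{DFS2} is needed. For cyclic groups the bound is of course immediate (for instance $\Cay(\mz_5,\{1\})$, a directed pentagon, is already a DRR with connection set far below the bound), so the substance lies entirely with the non-cyclic odd-order groups, and I would expect the general perturbation argument to be supplemented by a direct check for a few of the smallest such groups.
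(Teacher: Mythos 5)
First, a remark on the comparison itself: the paper does not prove Proposition~\ref{prop=CayleySet}; it is imported verbatim from \cite[Lemma~3.5]{DFS2}, so there is no internal argument to measure your attempt against. Judged on its own terms, your reduction is correct as far as it goes. Passing to the loopless complement preserves the automorphism group, so $\Cay(G,S)$ is a DRR if and only if $\Cay(G,G\setminus(S\cup\{1\}))$ is, and since the two connection sets have sizes summing to $|G|-1$ you may assume $|S|\le(|G|-1)/2$, with strict inequality automatic when $|G|$ is even. This disposes of all groups of even order and is a sensible first step.

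The gap is the entire remaining case: $|G|$ odd with $|S|=|\bar S|=(|G|-1)/2$. Here you do not give an argument; you state a strategy (delete one element of $S$ or of $\bar S$ and show that some deletion leaves a DRR) and then explicitly defer its execution to ``the careful stability analysis behind the cited Lemma~3.5'' --- but that lemma \emph{is} the statement you are being asked to prove, so the deferral is circular. Moreover, the strategy is not obviously sound: there is no a priori reason why, for an arbitrary DRR sitting exactly at the boundary value, some single-element deletion must again yield a DRR. A deletion can disconnect the digraph, can create a vertex-transitive digraph with a richer multiplier group, or can otherwise introduce automorphisms fixing the identity, and you offer no mechanism for excluding the possibility that every deletion from both $S$ and $\bar S$ fails simultaneously. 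What is actually needed at this point is either a direct construction of a DRR of provably small valency (Babai-style constructions produce connection sets of size comparable to a generating set plus bounded tails, which is far below $(|G|-1)/2$ once $|G|$ is moderately large, leaving finitely many small groups to check by hand) or the concrete perturbation analysis carried out in \cite{DFS2}. As written, your proposal establishes the proposition only for groups of even order.
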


Let $\Gamma:=\Cay(G,T_{i,j}: i,j\in\mz_n)$ be an $n$-Cayley digraph of $G$ with
$$T_{i,i}=\emptyset,\,\,\forall i\in\mathbb{Z}_n.$$
Then, for every $i\in \mathbb{Z}_n$, $\Gamma[G_i]$ is the empty graph. Thus $\Gamma$ is  an $n$-partite digraph where  every part is an orbit of $G$. We call these digraphs  \textbf{{\em $n$-partite Cayley digraphs}}.
Summing up, a group $G$ admits an $n$-$\rm PDR$ if and only if $G$ admits a regular $n$-partite Cayley digraph having $G$ as its automorphism group.

As we mentioned in the introduction, a regular $2$-partite Cayley digraph of $G$ is also called Haar digraph of $G$ (see~\cite{DFS2}). Moreover, since $T_{0,0}=T_{1,1}=\emptyset$, it can be written as  $\Cay (G,T_{0,1},T_{1,0})$.

In our work we need the following two results concerning $2$-PDRs (a.k.a. HDRs, for Haar digraphical representations): Proposition~\ref{prop=HDR} is the classification of finite groups admitting a HDR~\cite[Theorem~1.1]{DFS2}, Proposition~\ref{prop=HDRSet} is a technical result whose proof follows from the proof of  Lemma 3.4 and Theorem 1.1 in~\cite{DFS2}.

\begin{prop}\label{prop=HDR}
A finite group $G$ admits a $2$-$\mathrm{PDR}$ if and only if $G$ is not isomorphic to one of the following five groups: $\mz_1$, $\mz_2$, $\mz_3$, $\mz_2^2$ and $\mz_2^3$.
\end{prop}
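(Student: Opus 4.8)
The statement is a biconditional, and I would establish the two directions separately. For the non-existence direction, the five groups $\mz_1,\mz_2,\mz_3,\mz_2^2$ and $\mz_2^3$ have order at most $8$, so over each of them there are only finitely many regular $2$-partite Cayley digraphs $\Cay(G,T_{0,1},T_{1,0})$. I would run through all pairs of connection sets $T_{0,1},T_{1,0}\subseteq G$ subject to $|T_{0,1}|=|T_{1,0}|$ (which is exactly the condition forcing $\Gamma$ to be regular) and check in each case that $\Aut(\Gamma)$ properly contains the subgroup $R_2(G)$, the copy of $G$ acting by right translations. For these very symmetric groups an extra automorphism always materialises: either the two halves $G_0$ and $G_1$ look the same and can be interchanged, or the abundance of symmetries of the elementary abelian $2$-groups produces an automorphism fixing the bipartition but lying outside $R_2(G)$. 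This is a finite verification, feasible by hand and routine on a computer.

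For the construction direction, let $G$ be any finite group not isomorphic to one of the five exceptions. Since $\mz_1,\mz_2,\mz_3$ are among the exceptions, $|G|\ge 4$. The principal tool is Babai's classification, Proposition~\ref{prop=DRR}. If $G$ admits a DRR, then by Proposition~\ref{prop=CayleySet} I may fix a connection set $R$ with $1\notin R$, with $\Cay(G,R)$ a DRR, and with $|R|<(|G|-1)/2$. I would then build a Haar digraph whose composite length-two structure on each part, recording for $x_0,z_0\in G_0$ the number of directed paths $x_0\to y_1\to z_0$, reproduces the rigid Cayley digraph $\Cay(G,R)$; this forces every automorphism preserving the bipartition to restrict to an element of $R_2(G)$. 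The inequality $|R|<(|G|-1)/2$ leaves room to inflate $T_{0,1}$ and $T_{1,0}$ to a common size, so that $\Gamma$ becomes regular, while simultaneously breaking the symmetry between the two parts so that no automorphism interchanges them. The only remaining groups are those admitting no DRR yet absent from the exceptional list, namely $Q_8$, $\mz_2^4$ and $\mz_3^2$; for each of these I would give an explicit pair $(T_{0,1},T_{1,0})$ and confirm directly that the digraph obtained is a $2$-PDR.

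The heart of the matter, and the main obstacle, is proving that the constructed digraph has full automorphism group equal to $G$. The inclusion $R_2(G)\le\Aut(\Gamma)$ holds by construction, so everything rests on the reverse inclusion, which I would divide into two tasks. Excluding automorphisms that preserve the bipartition reduces, through the length-two structure, to the rigidity of the underlying DRR and is relatively straightforward. The subtle task is excluding automorphisms that swap the two parts: because regularity forces $|T_{0,1}|=|T_{1,0}|$, the two halves have identical in- and out-valencies, so the obstruction to a part-swap cannot be detected by any degree count and must be engineered into the finer structure of the two connection sets. This asymmetry is hardest to guarantee for the smallest groups, such as $\mz_4$, where Proposition~\ref{prop=CayleySet} only provides $|R|=1$ and hence very little freedom; these tight cases are where I expect the argument to require the most care, likely descending to an explicit, case-by-case analysis.
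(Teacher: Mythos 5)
The paper does not actually prove this proposition: it is imported wholesale from \cite[Theorem~1.1]{DFS2}, via the observation (made in the introduction) that a $2$-PDR of $G$ is exactly the same object as a Haar digraphical representation of $G$. So the honest comparison is between your sketch and the proof in that earlier paper; your outline does follow the same broad strategy used there and throughout the present paper (Babai's DRR classification plus Proposition~\ref{prop=CayleySet} for the generic case, explicit constructions for the DRR-less groups $Q_8$, $\mz_2^4$, $\mz_3^2$, and a finite verification for the exceptional small groups), and your identification of $|T_{0,1}|=|T_{1,0}|$ as the regularity condition and of part-swapping automorphisms as the delicate obstruction is accurate.

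That said, as a proof the proposal has genuine gaps. For the non-existence direction, the assertion that for each of the five exceptional groups ``an extra automorphism always materialises'' for one of the two reasons you name is not a proof: $\mz_3$ is not an elementary abelian $2$-group, and for a general pair $T_{0,1},T_{1,0}$ of equal size there is no a priori reason the two parts ``look the same'' (the natural candidate swap $g_0\mapsto (g^{-1})_1$, $g_1\mapsto(g^{-1})_0$ is an automorphism only when $T_{1,0}=T_{0,1}^{-1}$). What you actually have is a finite case check that must be carried out; you have described it but not done it, nor exhibited a uniform extra automorphism. For the existence direction, the entire burden of the theorem is the rigidity claim $\Aut(\G)=G$, and your ``length-two structure'' reduction to the DRR, the mechanism for breaking the $G_0\leftrightarrow G_1$ symmetry (the published construction does this by arranging that $T_{0,1}\cap T_{1,0}^{-1}=\{1\}$, so the undirected edges form a distinguished perfect matching), the tight cases such as $\mz_4$ where $|R|=1$, and the three sporadic connection-set constructions are all deferred rather than executed. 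The skeleton is right, but the load-bearing verifications are missing.
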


\begin{prop}\label{prop=HDRSet}
Let $G$ be a finite group and let $\Cay(G,R)$ be a $\mathrm{DRR}$ of $G$, where $1\notin R\subset G$ and $|R|<|G|/2$.
Then $G$ has a subset $L$ such that $\Cay(G,R\cup\{1\},L\cup\{1\})$ is a $2$-$\mathrm{PDR}$, where
$L\subseteq G\setminus (R^{-1}\cup\{1\})$ and $|L|=|R|$.
\end{prop}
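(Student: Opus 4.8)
The plan is to fix the first connection set to be $R\cup\{1\}$ and to treat $L$ (of size $|R|$, contained in $G\setminus(R^{-1}\cup\{1\})$) as a parameter to be tuned; I set $\Gamma:=\Cay(G,R\cup\{1\},L\cup\{1\})$ and write $A:=\Aut(\Gamma)$. By construction $G\cong\{R_2(g):g\in G\}\le A$ is semiregular with the two orbits $G_0$ and $G_1$, and since $T_{0,0}=T_{1,1}=\emptyset$ these orbits are the parts of a bipartite structure; thus the only thing to prove is the sharp equality $A=G$. I first record two facts valid for every admissible $L$. Since $\Cay(G,R)$ is a DRR it is connected, so $\langle R\rangle=G$ and the underlying bipartite graph of $\Gamma$ is connected; as a connected bipartite graph has a unique bipartition, $A$ preserves $\{G_0,G_1\}$ setwise. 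Second, the hypotheses $1\notin L$ and $L\cap R^{-1}=\emptyset$ force the digons (directed $2$-cycles) of $\Gamma$ to be exactly the pairs $\{g_0,g_1\}$: a digon out of $g_0$ along $t\in R\cup\{1\}$ survives only when $t^{-1}\in L\cup\{1\}$, i.e. only when $t=1$. Hence the perfect matching $\{\{g_0,g_1\}:g\in G\}$ is $A$-invariant, and the partner map $g_0\mapsto g_1$ is equivariant under the part-stabiliser $A_{\{G_0\}}$.

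Next I would compute $A_{\{G_0\}}$ using this invariant matching. Define a digraph on $G_0$ by declaring $g_0\to h_0$ whenever the matching partner $h_1$ of $h_0$ is an out-neighbour of $g_0$ in $\Gamma$; unwinding the definition, this is exactly $\Cay(G,R)$ (with a loop added at every vertex, which does not change the automorphism group). Every element of $A_{\{G_0\}}$ preserves both the matching and the out-neighbour relation, hence induces an automorphism of this derived digraph, and since $\Cay(G,R)$ is a DRR the induced permutation of $G_0$ lies in $G$. The action of $A_{\{G_0\}}$ on $G_0$ is moreover faithful, because an element fixing $G_0$ pointwise fixes every matching partner and so fixes $G_1$ pointwise as well. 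Therefore $A_{\{G_0\}}\le G$, and as $G\le A_{\{G_0\}}$ we get $A_{\{G_0\}}=G$. Consequently $A=G$ unless $A$ contains an element swapping $G_0$ and $G_1$, in which case $|A:G|=2$.

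It thus remains to choose $L$ so as to forbid a swap, and this is the crux. Suppose $\sigma\in A$ interchanges $G_0$ and $G_1$. Multiplying $\sigma$ by a suitable element of $G$ (possible since $A_{\{G_0\}}=G$ is regular on each part) I may assume $\sigma(1_0)=1_1$; then $\sigma$ fixes the matching edge $\{1_0,1_1\}$, and $\sigma^2\in A_{\{G_0\}}=G$ fixes $1_0$, whence $\sigma^2=1$ by semiregularity. Writing $\sigma(g_0)=\phi(g)_1$ and $\sigma(g_1)=\phi(g)_0$ and transcribing the two arc-conditions of $\Gamma$, one finds that $\phi$ is a permutation of $G$ with $\phi(1)=1$, $\phi(R)=L$, $\phi(L)=R$ and $hg^{-1}\in R\cup\{1\}\iff\phi(h)\phi(g)^{-1}\in L\cup\{1\}$; in other words $\phi$ is an identity-fixing digraph isomorphism $\Cay(G,R\cup\{1\})\to\Cay(G,L\cup\{1\})$. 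I would then exploit the freedom in $L$ to make such a $\phi$ impossible: the cleanest device is to arrange $\Cay(G,R\cup\{1\})\not\cong\Cay(G,L\cup\{1\})$, for instance by selecting $L\subseteq G\setminus(R^{-1}\cup\{1\})$ with $|L\cap L^{-1}|\neq|R\cap R^{-1}|$ (the number of digons being an isomorphism invariant). The counting inequality $|G\setminus(R^{-1}\cup\{1\})|=|G|-|R|-1\ge|R|$, which uses $|R|<|G|/2$, leaves ample room to pick such an $L$ of size $|R|$.

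The step I expect to be genuinely delicate is this last one: guaranteeing, uniformly over all finite groups $G$ and all admissible $R$, the existence of an $L$ inside $G\setminus(R^{-1}\cup\{1\})$ of size $|R|$ that destroys every identity-fixing isomorphism $\phi$ (equivalently, every involutory swap). For most groups adjusting the symmetric part $L\cap L^{-1}$ suffices, but for groups with few available elements or rigid inverse-pairing one must argue more carefully, tracking the unique identity-fixing isomorphism between the two Cayley digraphs (unique because $\Aut(\Cay(G,R\cup\{1\}))=G$ is regular, so isomorphisms form a single coset of $G$) and verifying it cannot be an involution exchanging $R$ and $L$. This is exactly the case analysis carried out in the proofs of Lemma~3.4 and Theorem~1.1 of~\cite{DFS2}, whose argument I would follow here.
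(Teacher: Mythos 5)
Your reduction is correct and, in the end, lands exactly where the paper does: the paper gives no self-contained proof of Proposition~\ref{prop=HDRSet} either, but states only that it follows from the proofs of Lemma~3.4 and Theorem~1.1 of~\cite{DFS2} --- precisely the reference you fall back on for the existence of a suitable $L$. The structural part you do supply is sound and is in fact more detail than the paper records: the $A$-invariant perfect matching of digons $\{g_0,g_1\}$ forced by $1\notin L$ and $L\cap R^{-1}=\emptyset$, the identification of the induced action of the part-stabiliser on $G_0$ with a subgroup of $\Aut(\Cay(G,R))=G$, hence $A_{\{G_0\}}=G$ and $[A:G]\le 2$, and the characterisation of a part-swapping automorphism as an involutory, identity-fixing isomorphism $\phi$ with $\phi(R)=L$ and $\phi(L)=R$. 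One caution, though: the concrete device you propose for excluding the swap, namely choosing $L$ with $|L\cap L^{-1}|\ne |R\cap R^{-1}|$, cannot work uniformly. For elementary abelian $2$-groups every subset is inverse-closed, so $|L\cap L^{-1}|=|L|=|R|=|R\cap R^{-1}|$ for every admissible $L$; and when $|G|$ is odd and $|R|=(|G|-1)/2$, the constraint $L\subseteq G\setminus(R^{-1}\cup\{1\})$ with $|L|=|R|$ forces $L$ to be the whole complement, leaving no freedom to tune anything. So the appeal to the case analysis of~\cite{DFS2} is not a cosmetic shortcut: it carries the real weight of the existence claim, exactly as it does in the paper itself.
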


We conclude this section with some results concerning small troublesome groups.

\begin{lem}\label{lem=small}The following hold:
\begin{enumerate}
\item\label{lem=small1}$\mz_1$ admits an $n$-$\mathrm{PDR}$ if and only if $n=1$ or $n\ge 6$,
\item\label{lem=small2}$\mz_1$ admits an $n$-$\mathrm{PDR}$ if and only if $n=1$ or $n\ge 3$,
\item\label{lem=small3}$\mz_3$ admit an $n$-$\mathrm{PDR}$ if and only if $n\ge 3$.
\end{enumerate}
\end{lem}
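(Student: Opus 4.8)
The plan is to work throughout with the dictionary established in Section~2: a group $G$ admits an $n$-PDR if and only if it admits a regular $n$-partite Cayley digraph $\Cay(G,T_{i,j}: i,j\in\mz_n)$, with $T_{i,i}=\emptyset$, whose full automorphism group coincides with the right regular copy of $G$. (We read part~\eqref{lem=small2} as a statement about $\mz_2$, consistently with Theorem~\ref{theo=main}.) For $G=\mz_1$ this dictionary is especially transparent: each part $G_i$ is a single vertex, so an $n$-partite Cayley digraph is nothing but a loopless digraph $\G$ on the $n$ vertices $0,\ldots,n-1$, the semiregularity and orbit conditions are automatic, and the only remaining requirement is $\Aut(\G)=1$. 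Hence part~\eqref{lem=small1} is equivalent to asserting that a regular loopless digraph on $n$ vertices with trivial automorphism group exists exactly when $n=1$ or $n\ge 6$.

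For the necessity half of part~\eqref{lem=small1} I would rule out $n\in\{2,3,4,5\}$ by a short analysis of regular loopless digraphs of each admissible valency $d$. The key reduction is that a digraph and its complement inside the complete loopless digraph share the same automorphism group, so valency $d$ may be paired with valency $n-1-d$. The extreme valencies $0$ and $n-1$ give the empty and complete digraphs, with automorphism group $\Sym(n)$; valency $1$ (and dually $n-2$) gives a disjoint union of directed cycles of total length $n$, which for $2\le n\le 5$ always carries a nonidentity automorphism, either a rotation of a single cycle or a swap of two isomorphic cycles. This disposes of $n\le 4$ at once and, for $n=5$, leaves only the self-complementary valency-$2$ digraphs on five vertices; these I would settle by a direct inspection of the finitely many such digraphs up to isomorphism, exhibiting a nontrivial automorphism in each case. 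For sufficiency, the single vertex furnishes the $1$-PDR, while for $n\ge 6$ I would construct regular asymmetric digraphs explicitly, realising each as the union of the arc sets of two permutations of $\{0,\ldots,n-1\}$ --- a fixed Hamiltonian $n$-cycle together with a second permutation chosen to be neither a power of nor compatible with that cycle --- and verifying rigidity; the handful of values of $n$ just above the threshold can be handled as explicit base cases.

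For parts~\eqref{lem=small2} and~\eqref{lem=small3} the cases $n=1$ are immediate, since a $1$-PDR is forced to be the empty digraph on $|G|$ vertices: for $\mz_2$ this has automorphism group $\Sym(2)\cong\mz_2$ and is a valid $1$-PDR, whereas for $\mz_3$ it has automorphism group $\Sym(3)\ne\mz_3$, so no $1$-PDR exists. The case $n=2$ for both groups is precisely the Haar digraphical representation problem, and nonexistence is read off from Proposition~\ref{prop=HDR}, whose exception list contains both $\mz_2$ and $\mz_3$. The real content is then the construction, for every $n\ge 3$, of an $n$-partite Cayley digraph of $\mz_2$ (respectively $\mz_3$) whose automorphism group is exactly the regular copy of the group. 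Here I would choose the between-part patterns $T_{i,j}$ so that the induced digraph on the $n$ parts is itself rigid --- forcing every automorphism to fix each part setwise --- and then install within the parts an asymmetric gadget that pins the residual within-part freedom down to exactly the right translations by $G$; as before, small $n$ are treated as base cases and propagated to all larger $n$ by a uniform extension.

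The main obstacle throughout is the exact control of the automorphism group, and it surfaces at the two places where the symmetry-reduction arguments run out. On the nonexistence side this is the valency-$2$ analysis on five vertices, which is genuinely self-complementary and must be checked essentially by hand. On the existence side it is the verification that the explicit digraphs admit \emph{no} automorphism beyond the intended $|G|$ translations: regularity and rigidity pull against each other, so the arcs added to break symmetry must create a distinguishable landmark that no nonidentity part-permutation can preserve, and the bulk of the work lies in showing that any automorphism must fix this landmark and therefore lie in the right regular copy of $G$.
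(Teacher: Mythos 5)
Your overall architecture is sound, and for parts~\eqref{lem=small2} and~\eqref{lem=small3} it is essentially the paper's: the $n=1$ and $n=2$ cases are dispatched exactly as you say (empty digraph for $n=1$; Proposition~\ref{prop=HDR} for $n=2$), and for $n\ge 3$ the paper, like you, builds an explicit $n$-partite Cayley digraph and forces every automorphism first to fix each part setwise and then to fix everything pointwise. Concretely, the paper takes $T_{i,i+1}=T_{i+1,i}=\{1\}$ for $i\ne 1$ and $T_{1,2}=T_{2,1}=\{a\}$, so that $\G[G_1\cup G_2]$ is a directed cycle (the ``landmark'') while all other consecutive pairs induce perfect matchings; a unique-path argument kills the potential swap of $G_1$ and $G_2$, and the matchings propagate pointwise fixity around the cycle of parts. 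Note that this is slightly different from your stated plan of making the induced pattern \emph{on the set of parts} rigid --- that pattern is a cycle, hence not rigid --- so when you write the construction down you will need, as the paper does, to distinguish parts by the isomorphism type of the induced bipartite sub-digraphs rather than by a rigid quotient. The real divergence is part~\eqref{lem=small1}: the paper observes that over $\mz_1$ the notions of $n$-PDR and D$n$SR coincide and simply quotes the classification of D$n$SRs of the trivial group from \cite{DFS}, whereas you propose to reprove from scratch that a regular loopless rigid digraph on $n$ vertices exists exactly for $n=1$ and $n\ge 6$. Your reduction by complementation and the valency-$1$ analysis are fine (indeed a $1$-regular loopless digraph is the functional graph of a derangement, which is an automorphism of itself), but the two decisive steps --- the inspection of all $2$-regular digraphs on five vertices and the exhibition of rigid regular digraphs for every $n\ge 6$ together with a rigidity proof --- are left as declarations of intent. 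They are exactly the content of the cited result, so this route buys self-containedness at the cost of a nontrivial amount of unexecuted case analysis; as written, the proposal is a correct plan rather than a complete proof, and the citation route is the economical one.
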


\begin{proof} We start by dealing with $\mathbb{Z}_1$. Observe that over $\mathbb{Z}_1$, $n$-PDRs and D$n$SRs define the same family of digraphs. Now, by~\cite[Theorem 1.2(4)]{DFS},  $\mz_1$ admits a D$n$SR if and  only if $n=1$ or $n\geq 6$. Thus part~\eqref{lem=small1} immediately follows.

From~\eqref{boundn}, $n\geq 3$. Let $G$ be either $\mz_2$ or $\mz_3$, let $a$ be a generator of $G$ and let
\begin{eqnarray*}
T_{i,i+1}&=&T_{i+1,i}:=\{1\},\mbox{ when } i\in\mz_n\setminus\{1\}, \\
T_{2,1}&=&T_{1,2}:=\{a\},\\
 T_{j,k}&:=&\emptyset,\qquad\hbox{ otherwise}.
\end{eqnarray*}
Let $\Gamma:=\Cay(G,T_{i,j}: i,j\in\mz_n)$ and let $A:=\Aut(\G)$. See Figure~\ref{Fig1} for a rough drawing of $\Gamma$. Clearly, $\Gamma$ is a regular $n$-partite Cayley digraph for $G$. Therefore, to finish the proof, we only need to show that $A=G$. To do that, we use the fact that,
\begin{center}
$(\dag)\qquad$there is a unique undirected path from $1_1$ to $1_2$ of length $n-1$ (namely, the path $1_1,1_0,1_{n-1},1_{n-2}, \ldots, 1_3,1_2$), whereas, there is no such path from $1_2$ to $a_1$.
\end{center}
\begin{figure}[!ht]
\begin{tikzpicture}[node distance=1.2cm,thick,scale=0.7,every node/.style={transform shape},scale=1.8]
\node[circle](A00){};
\node[left=of A00](A000){};
\node[left=of A000](A0001){};
\node[left=of A0001, circle,draw,inner sep=1pt, label=above:{$1_0$}](A0){};
\node[right=of A0, circle,draw, inner sep=1pt, label=above:{$1_1$}](B0){};
\node[below=of A0, circle,draw, inner sep=1pt, label=below:{$a_0$}](A1){};
\node[right=of A1, circle,draw, inner sep=1pt, label=below:{$a_1$}](B1){};
\node[right=of B0, circle,draw, inner sep=1pt, label=above:{$1_2$}](C0){};
\node[below=of C0, circle,draw, inner sep=1pt, label=below:{$a_2$}](C1){};
\node[right=of C0, circle,draw, inner sep=1pt, label=below:{$1_{n-1}$}](B11){};
\node[below=of B11, circle,draw, inner sep=1pt, label=below:{$a_{n-1}$}](B12){};

\draw (A0) to  (B0);
\draw (A1) to (B1);
\draw[->] (B0) to (C0);
\draw[->] (B1) to (C1);
\draw[->] (C0) to (B1);
\draw[->] (C1) to (B0);
\draw[dashed] (C0) to (B11);
\draw[dashed] (C1) to (B12);
\draw (A0) to [bend left] node [above]{} (B11);
\draw (A1) to [bend right] node [above]{}(B12);

\node[right=of C0](D00){};
\node[right=of D00, circle,draw, inner sep=1pt, label=above:{$1_0$}](D0){};
\node[right=of D0, circle,draw, inner sep=1pt, label=above:{$1_1$}](E0){};
\node[below=of D0, circle,draw, inner sep=1pt, label=below:{$a_0$}](D1){};
\node[below=of D1, circle,draw, inner sep=1pt, label=below:{$(a^2)_0$}](D2){};
\node[right=of E0, circle,draw, inner sep=1pt, label=above:{$1_2$}](F0){};
\node[below=of E0, circle,draw, inner sep=1pt, label=below:{$a_1$}](E1){};
\node[below=of E1, circle,draw, inner sep=1pt, label=below:{$(a^2)_1$}](E2){};
\node[below=of F0, circle,draw, inner sep=1pt, label=below:{$a_2$}](F1){};
\node[below=of F1, circle,draw, inner sep=1pt, label=below:{$(a^2)_2$}](F2){};
\node[right=of F0, circle,draw, inner sep=1pt, label=below:{$1_{n-1}$}](G0){};
\node[below=of G0, circle,draw, inner sep=1pt, label=below:{$a_{n-1}$}](G1){};
\node[below=of G1, circle,draw, inner sep=1pt, label=below:{$(a^2)_{n-1}$}](G2){};

\draw (D0) to (E0);
\draw (D1) to (E1);
\draw (D2) to (E2);
\draw[->] (E0) to (F0);
\draw[->] (E1) to (F1);
\draw[->] (E2) to (F2);
\draw[->] (F0) to (E1);
\draw[->] (F1) to (E2);
\draw[->] (F2) to (E0);
\draw[dashed] (F0) to (G0);
\draw[dashed] (F1) to (G1);
\draw[dashed] (F2) to (G2);
\draw (D0) to [bend left] node [above]{} (G0);
\draw (D1) to [bend left] node [above]{}(G1);
\draw (D2) to [bend left] node [above]{}(G2);
\end{tikzpicture}
\caption{$n$-$\rm PDR$s for $\mathbb{Z}_2$ and $\mz_3$ with $n\geq 3$}\label{Fig1}
\end{figure}

Observe now that
\begin{itemize}
\item $\G[G_1\cup G_2]$ is a directed cycle  of length four when $G\cong \mz_2$ and of length six when $G\cong \mz_3$ (every arc on the cycle is a directed edge),
\item for $i\in\mz_n\setminus\{1\}$, $\G[G_i\cup G_{i+1}]$ is a
perfect matching,
\item in all other cases, $\G[G_j \cup G_k]$ is the empty graph.
\end{itemize}See again Figure~\ref{Fig1}. Thus, $A$ fixes $G_1\cup G_2$ setwise. In particular, $A$ induces a group of automorphisms of the directed cycle $\Gamma[G_1\cup G_2]$. Therefore, $\{G_1,G_2\}$ is a system of imprimitivity for the (not necessarily transitive) action of $A$ on $G_1\cup G_2$.

Suppose $A$ does not fix setwise $G_1$. As $G\le A$ and as $G$ acts transitively on $G_1$ and on $G_2$, there exists $\a\in A$ with $1_1^{\a}=1_2$. Since $(1_1,1_2)$ is an arc of $\Gamma[G_1\cup G_2]$, $(1_1,1_2)^\a=(1_1^\a,1_2^\a)=(1_2,1_2^\a)$ is also an arc of $\Gamma[G_1\cup G_2]$ and hence $1_2^\a=a_1$. However, this contradicts~$(\dag)$. Thus $A$ fixes $G_1$ and $G_2$ setwise. Since $G$ acts transitively on $G_2$, from the Frattini argument, we have  $A=GA_{1_2}$. Now, as $\Gamma[G_1\cup G_2]$ is a directed cycle, it follows easily that $A_{1_2}$ fixes $G_1$ and $G_2$ pointwise.

Since $\G[G_2\cup G_3]$ is a perfect matching, $A_{1_2}$ fixes $G_3$ pointwise, and arguing inductively, $A_{1_2}$ fixes
$G_i$ pointwise for every $i\in\mz_n$. It follows $A_{1_2}=1$ and $A=GA_{1_2}=G$.
\end{proof}

\begin{lem}\label{lem=2^2,2^3}
The groups $\mz_2^2$ and $\mz_2^3$ admit an $n$-$\mathrm{PDR}$ if and only $n\ge 3$.
\end{lem}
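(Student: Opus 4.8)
The two implications are handled separately, and the ``only if'' direction follows immediately from results already in hand. Suppose $G\in\{\mz_2^2,\mz_2^3\}$ admits an $n$-PDR. A $1$-PDR is an empty graph, whose automorphism group is a full symmetric group; since $G$ would have to act regularly on its single orbit, this forces $|G|\leq 2$, and as $|G|\geq 4$ we get $n\neq 1$. A $2$-PDR is the same as a HDR, and by Proposition~\ref{prop=HDR} neither $\mz_2^2$ nor $\mz_2^3$ admits a HDR, so $n\neq 2$. Hence $n\geq 3$, as required.

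For the converse, fix $n\geq 3$ and $G\in\{\mz_2^2,\mz_2^3\}$. By Proposition~\ref{prop=DRR} neither group admits a DRR, so the systematic machinery of Propositions~\ref{prop=CayleySet} and~\ref{prop=HDRSet} is unavailable, and I must exhibit an $n$-partite Cayley digraph $\Gamma=\Cay(G,T_{i,j}:i,j\in\mz_n)$ with $T_{i,i}=\emptyset$ for all $i$, and prove directly that $\Aut(\Gamma)=G$. The plan is to follow the architecture of Lemma~\ref{lem=small}: join most cyclically consecutive parts by a perfect matching, setting $T_{i,i+1}=T_{i+1,i}=\{1\}$, and concentrate all symmetry-breaking information in a short rigid \emph{core} carried by a few consecutive parts, with directed arcs preventing $\Aut(\Gamma)$ from swapping the two ``ends'' of the core. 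The essential new feature, compared with Lemma~\ref{lem=small}, is that the directed-cycle core used there is now forbidden: the automorphism group of a directed cycle is cyclic, while neither $\mz_2^2$ nor $\mz_2^3$ embeds into a cyclic group (each has more than one involution), so no semiregular copy of $G$ can act on a directed cycle. The core therefore cannot be bipartite either, which is precisely why the $n=2$ case fails, so I would place it on three consecutive parts $G_0,G_1,G_2$ and choose directed sets among $T_{0,1},T_{1,0},T_{1,2},T_{2,1},T_{0,2},T_{2,0}$ realising a rigid regular tripartite Cayley digraph of $G$. The value $n=3$, where $\Gamma$ equals this core, is the genuine base case.

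Granting such a core, the proof that $\Aut(\Gamma)=G$ would mirror Lemma~\ref{lem=small}. The induced sub-digraphs on consecutive pairs of parts distinguish the three core parts from the matching backbone, since the matchings are undirected while the core carries directed arcs; this forces $\Aut(\Gamma)$ to preserve $G_0\cup G_1\cup G_2$ setwise, and the asymmetric local structure among these three parts then pins each of $G_0,G_1,G_2$ setwise. An analogue of property~$(\dag)$ — asserting the uniqueness of certain short directed walks inside the core — combined with a Frattini argument forces the stabiliser of a base vertex to fix $G_0,G_1,G_2$ pointwise; propagating along the remaining perfect matchings $G_2,G_3,\dots,G_{n-1},G_0$ then shows this stabiliser is trivial, whence $\Aut(\Gamma)=G$.

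The main obstacle is the design and rigidity verification of the tripartite core for these two elementary abelian groups. Being elementary abelian, $\mz_2^2$ and $\mz_2^3$ carry an abundance of ``accidental'' automorphisms that any candidate core must simultaneously destroy, and the one-parameter directed-cycle trick is unavailable for the reason noted above; I therefore expect the heart of the work to be an explicit, and probably case-by-case, choice of the directed connection sets on $G_0,G_1,G_2$, followed by a hands-on check that no automorphism other than those in $G$ survives. It is plausible that the smallest values $n=3$ (and perhaps $n=4,5$) must be settled by ad hoc constructions before the uniform matching-extension argument delivers all larger $n$.
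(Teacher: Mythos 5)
Your ``only if'' direction is fine and matches what the paper needs ($n=1$ is excluded because a $1$-PDR is an empty graph, $n=2$ by Proposition~\ref{prop=HDR}), and your proposed architecture for the converse --- a rigid core on three consecutive parts $G_0,G_1,G_2$ glued to a cyclic chain of perfect matchings $T_{i,i+1}=T_{i+1,i}=\{1\}$, followed by setwise fixing of the core, a Frattini argument, and pointwise propagation along the matchings --- is exactly the architecture of the paper's proof. However, there is a genuine gap: you never actually construct the core. You explicitly defer ``the design and rigidity verification of the tripartite core'' as ``the main obstacle'' and ``the heart of the work'', but that design and verification \emph{is} the proof of this lemma; without explicit connection sets and a concrete check that no extra automorphism survives, nothing has been proved. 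For the record, the paper takes, e.g.\ for $G=\langle a\rangle\times\langle b\rangle\cong\mz_2^2$, the sets $T_{0,1}=\{1\}$, $T_{1,0}=\{a\}$, $T_{1,2}=\{b\}$, $T_{2,1}=\{a\}$ (and an analogous choice with $a,b,c$ for $\mz_2^3$), so that $\Gamma[G_0\cup G_1]$ and $\Gamma[G_1\cup G_2]$ are each disjoint unions of directed $4$-cycles; rigidity comes from the facts that $G_1$ is the unique part meeting no undirected edge, that a hypothetical swap of $G_0$ and $G_2$ would force a specific pair of directed $4$-cycles to be interchanged in a way that identifies two distinct vertices of $G_1$, and that the stabiliser of $1_0$ must fix every directed $4$-cycle and hence fix $G_0\cup G_1\cup G_2$ pointwise.

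Two smaller inaccuracies in your heuristics. First, you argue that ``the directed-cycle core used'' in Lemma~\ref{lem=small} ``is now forbidden'' because $\mz_2^2$ and $\mz_2^3$ do not embed in a cyclic group; this only rules out a \emph{single spanning} directed cycle on $G_i\cup G_{i+1}$, and the paper's core is in fact built from \emph{unions} of short directed cycles on which $G$ acts semiregularly by permuting and rotating the cycles. Second, your suggestion that $n=3$ (and perhaps $n=4,5$) may need separate ad hoc treatment is unnecessary: the paper's single construction is uniform in all $n\ge 3$, with the case $n=3$ simply having one matching edge-set $T_{2,0}=T_{0,2}=\{1\}$ closing the cycle of parts.
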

\begin{proof}
From~\eqref{boundn}, $n\geq 3$. Suppose first $G\cong\mz_2^2$.  Let $a,b\in G$ with  $G=\langle a\rangle\times \langle b\rangle$ and let
\begin{eqnarray*}
T_{i,i+1}&=&T_{i+1,i}:=\{1\},\,\,\mbox{ when } i\in\mz_n\setminus\{0,1\}, \\
T_{0,1}&:=&\{1\},\,\,T_{1,0}:=\{a\},\,\,T_{1,2}:=\{b\},\,\,T_{2,1}:=\{a\},\\
T_{j,k}&:=&\emptyset,\,\,\hbox{ otherwise}.
\end{eqnarray*}
Let $\Gamma:=\Cay(G,T_{i,j}: i,j\in\mz_n)$ and let $A:=\Aut(\G)$. See Figure~\ref{Fig2} for a rough drawing of $\Gamma$. Clearly, $\Gamma$ is a regular $n$-partite Cayley digraph of $G$. To finish the proof, we only need to show that $A=G$.

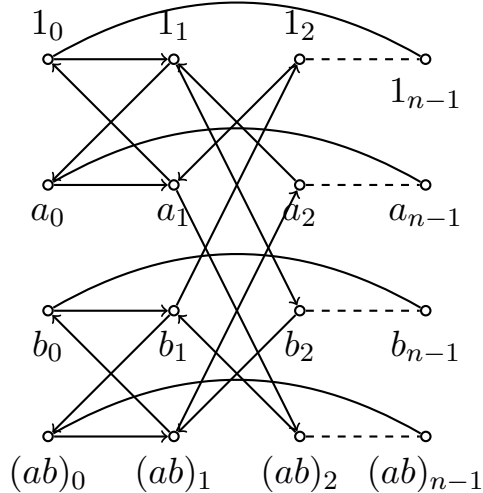
\begin{figure}[!ht]
\begin{tikzpicture}[node distance=1.2cm,thick,scale=0.7,every node/.style={transform shape},scale=1.8]
\node[circle](A0){};
\node[right=of A0, circle,draw, inner sep=1pt, label=above:{$1_0$}](D0){};
\node[right=of D0, circle,draw, inner sep=1pt, label=above:{$1_1$}](E0){};
\node[below=of D0, circle,draw, inner sep=1pt, label=below:{$a_0$}](D1){};
\node[below=of D1, circle,draw, inner sep=1pt, label=below:{$b_0$}](D2){};
\node[below=of D2, circle,draw, inner sep=1pt, label=below:{$(ab)_0$}](D3){};

\node[right=of E0, circle,draw, inner sep=1pt, label=above:{$1_2$}](F0){};
\node[below=of E0, circle,draw, inner sep=1pt, label=below:{$a_1$}](E1){};
\node[below=of E1, circle,draw, inner sep=1pt, label=below:{$b_1$}](E2){};
\node[below=of E2, circle,draw, inner sep=1pt, label=below:{$(ab)_1$}](E3){};

\node[below=of F0, circle,draw, inner sep=1pt, label=below:{$a_2$}](F1){};
\node[below=of F1, circle,draw, inner sep=1pt, label=below:{$b_2$}](F2){};
\node[below=of F2, circle,draw, inner sep=1pt, label=below:{$(ab)_2$}](F3){};

\node[right=of F0, circle,draw, inner sep=1pt, label=below:{$1_{n-1}$}](G0){};
\node[below=of G0, circle,draw, inner sep=1pt, label=below:{$a_{n-1}$}](G1){};
\node[below=of G1, circle,draw, inner sep=1pt, label=below:{$b_{n-1}$}](G2){};
\node[below=of G2, circle,draw, inner sep=1pt, label=below:{$(ab)_{n-1}$}](G3){};

\draw[->] (D0) to (E0);
\draw[->] (D1) to (E1);
\draw[->] (D2) to (E2);
\draw[->] (D3) to (E3);

\draw[->] (E0) to (D1);
\draw[->] (E1) to (D0);
\draw[->] (E2) to (D3);
\draw[->] (E3) to (D2);

\draw[->] (E0) to (F2);
\draw[->] (E1) to (F3);
\draw[->] (E2) to (F0);
\draw[->] (E3) to (F1);

\draw[->] (F0) to (E1);
\draw[->] (F1) to (E0);
\draw[->] (F2) to (E3);
\draw[->] (F3) to (E2);

\draw[dashed] (F0) to (G0);
\draw[dashed] (F1) to (G1);
\draw[dashed] (F2) to (G2);
\draw[dashed] (F3) to (G3);
\draw (D0) to [bend left] node [above]{} (G0);
\draw (D1) to [bend left] node [above]{}(G1);
\draw (D2) to [bend left] node [above]{}(G2);
\draw (D3) to [bend left] node [above]{}(G3);
\end{tikzpicture}
\caption{$n$-$\rm PDR$s for $\mathbb{Z}_2^2$ with $n\geq 3$}\label{Fig2}
\end{figure}

By Figure~\ref{Fig2}, for every $g_1\in G_1$, $g_1$  is incident to no undirected edge of $\Gamma$; however, for $i\in\mz_n\setminus\{1\}$,  for every $g_i\in G_i$, $g_i$ is incident to at least one undirected edge of $\Gamma$. Thus, $A$ fixes $G_1$ setwise. Furthermore,
\begin{itemize}
\item $\G[G_i\cup G_{i+1}]$ is a perfect matching, for every $i\in\mz_n\setminus\{0,1\}$,
\item  $\G[G_0 \cup G_1]$ and $\G[G_1\cup G_2]$ are both  union of two directed cycles of length $4$,  and
\item in all other cases, $\G[G_j\cup G_k]$ is the empty graph.
\end{itemize}
Thus, $A$ fixes $G_1$ and $G_0\cup G_2$ setwise.

Suppose $A$ does not fix setwise $G_0$. Then, there exists $\a\in A$ with $1_0^{\a}=1_2$. Observe that  $1_0,1_{n-1},1_{n-2},\ldots,1_2$ is the unique undirected
path of length $n-2$ passing through $1_0$, and also the unique undirected
path of length $n-2$ passing through $1_2$. Then $1_0^{\a}=1_2$ implies $1_2^{\a}=1_0$, that is, $\a$ interchanges $1_0$ and $1_2$. Using Figure~\ref{Fig2}, it is readily seen that
$\a$ interchanges the two directed cycles $(1_0,1_1,a_0,a_1)$ and $(1_2,a_1,(ab)_2,b_1)$, that is,
\begin{align*}
~~~~(1_2,a_1,(ab)_2,b_1)&=(1_0,1_1,a_0,a_1)^\a=
(1_0^\a,1_1^\a,a_0^\a,a_1^\a)=
(1_2,1_1^\a,a_0^\a,a_1^\a),
\\ (1_0,1_1,a_0,a_1)&=(1_2,a_1,(ab)_2,b_1)^\a=
(1_2^\a,a_1^\a,(ab)_2^\a,b_1^\a)=
(1_0,a_1^\a,(ab)_2^\a,b_1^\a).
\end{align*}
Thus $1_1=a_1^\a=b_1,$ which is a contradiction. Thus, $A$ fixes $G_0$ and $G_2$ setwise. Since $G$ is transitive on $G_0$, from the Frattini argument, we have $A=GA_{1_0}$. Moreover, now that we know that $A_{1_0}$ fixes setwise $G_0$, $G_1$ and $G_2$, it is easy to see that $A_{1_0}$ fixes $G_0$, $G_1$ and $G_2$ pointwise, because $A_{1_0}$ fixes all directed cycles in $\G[G_0\cup G_1]\cup \G[G_1\cup G_2]$. Since $\G[G_2\cup G_3]$ is a perfect matching, $A_{1_0}$ fixes $G_3$ pointwise, and arguing inductively, $A_{1_0}$ fixes $G_i$ pointwise for every $i\in\mz_n$. It follows that $A=GA_{1_0}=G$, that is, $\Gamma$ is an $n$-PDR.

\smallskip

We now consider the case $G\cong\mathbb{Z}_2^3$. Let $a,b,c\in G$ with $G=\langle a\rangle\times \langle b\rangle\times \langle c\rangle$ and let
\begin{eqnarray*}
T_{i,i+1}&=&T_{i+1,i}:=\{1\},\,\,\mbox{ when } i\in\mz_n\setminus\{0,1\}, \\
T_{0,1}&:=&\{a\},\,\,T_{1,0}:=\{b\},\,\,T_{1,2}:=\{a\},\,\,T_{2,1}:=\{c\},\\
T_{j,k}&:=&\emptyset,\,\,\hbox{ otherwise}.
\end{eqnarray*}
Let $\Gamma=\Cay(G,T_{i,j}: i,j\in\mz_n)$ and let $A=\Aut(\G)$. As usual, we have drawn $\Gamma$ in Figure~\ref{Fig3}. Then $\Gamma$ is a regular $n$-partite Cayley digraph of $G$.

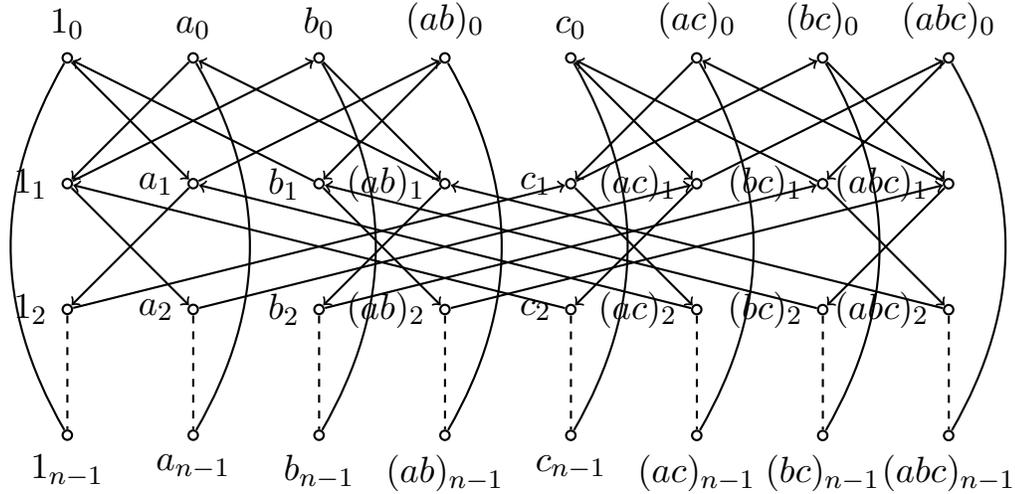
\begin{figure}[!ht]
\begin{tikzpicture}[node distance=1.2cm,thick,scale=0.7,every node/.style={transform shape},scale=1.8]
\node[circle](A0){};
\node[right=of A0, circle,draw, inner sep=1pt, label=above:{$1_0$}](D0){};
\node[below=of D0, circle,draw, inner sep=1pt, label=left:{$1_1$}](E0){};
\node[right=of D0, circle,draw, inner sep=1pt, label=above:{$a_0$}](D1){};
\node[right=of D1, circle,draw, inner sep=1pt, label=above:{$b_0$}](D2){};
\node[right=of D2, circle,draw, inner sep=1pt, label=above:{$(ab)_0$}](D3){};
\node[right=of D3, circle,draw, inner sep=1pt, label=above:{$c_0$}](D4){};
\node[right=of D4, circle,draw, inner sep=1pt, label=above:{$(ac)_0$}](D5){};
\node[right=of D5, circle,draw, inner sep=1pt, label=above:{$(bc)_0$}](D6){};
\node[right=of D6, circle,draw, inner sep=1pt, label=above:{$(abc)_0$}](D7){};

\node[below=of E0, circle,draw, inner sep=1pt, label=left:{$1_2$}](F0){};
\node[right=of E0, circle,draw, inner sep=1pt, label=left:{$a_1$}](E1){};
\node[right=of E1, circle,draw, inner sep=1pt, label=left:{$b_1$}](E2){};
\node[right=of E2, circle,draw, inner sep=1pt, label=left:{$(ab)_1$}](E3){};
\node[right=of E3, circle,draw, inner sep=1pt, label=left:{$c_1$}](E4){};
\node[right=of E4, circle,draw, inner sep=1pt, label=left:{$(ac)_1$}](E5){};
\node[right=of E5, circle,draw, inner sep=1pt, label=left:{$(bc)_1$}](E6){};
\node[right=of E6, circle,draw, inner sep=1pt, label=left:{$(abc)_1$}](E7){};

\node[right=of F0, circle,draw, inner sep=1pt, label=left:{$a_2$}](F1){};
\node[right=of F1, circle,draw, inner sep=1pt, label=left:{$b_2$}](F2){};
\node[right=of F2, circle,draw, inner sep=1pt, label=left:{$(ab)_2$}](F3){};
\node[right=of F3, circle,draw, inner sep=1pt, label=left:{$c_2$}](F4){};
\node[right=of F4, circle,draw, inner sep=1pt, label=left:{$(ac)_2$}](F5){};
\node[right=of F5, circle,draw, inner sep=1pt, label=left:{$(bc)_2$}](F6){};
\node[right=of F6, circle,draw, inner sep=1pt, label=left:{$(abc)_2$}](F7){};

\node[below=of F0, circle,draw, inner sep=1pt, label=below:{$1_{n-1}$}](G0){};
\node[right=of G0, circle,draw, inner sep=1pt, label=below:{$a_{n-1}$}](G1){};
\node[right=of G1, circle,draw, inner sep=1pt, label=below:{$b_{n-1}$}](G2){};
\node[right=of G2, circle,draw, inner sep=1pt, label=below:{$(ab)_{n-1}$}](G3){};
\node[right=of G3, circle,draw, inner sep=1pt, label=below:{$c_{n-1}$}](G4){};
\node[right=of G4, circle,draw, inner sep=1pt, label=below:{$(ac)_{n-1}$}](G5){};
\node[right=of G5, circle,draw, inner sep=1pt, label=below:{$(bc)_{n-1}$}](G6){};
\node[right=of G6, circle,draw, inner sep=1pt, label=below:{$(abc)_{n-1}$}](G7){};

\draw[->] (D0) to (E1);
\draw[->] (D1) to (E0);
\draw[->] (D2) to (E3);
\draw[->] (D3) to (E2);
\draw[->] (D4) to (E5);
\draw[->] (D5) to (E4);
\draw[->] (D6) to (E7);
\draw[->] (D7) to (E6);

\draw[->] (E0) to (F1);
\draw[->] (E1) to (F0);
\draw[->] (E2) to (F3);
\draw[->] (E3) to (F2);
\draw[->] (E4) to (F5);
\draw[->] (E5) to (F4);
\draw[->] (E6) to (F7);
\draw[->] (E7) to (F6);

\draw[->] (E0) to (D2);
\draw[->] (E1) to (D3);
\draw[->] (E2) to (D0);
\draw[->] (E3) to (D1);
\draw[->] (E4) to (D6);
\draw[->] (E5) to (D7);
\draw[->] (E6) to (D4);
\draw[->] (E7) to (D5);

\draw[->] (F0) to (E4);
\draw[->] (F1) to (E5);
\draw[->] (F2) to (E6);
\draw[->] (F3) to (E7);
\draw[->] (F4) to (E0);
\draw[->] (F5) to (E1);
\draw[->] (F6) to (E2);
\draw[->] (F7) to (E3);

\draw[dashed] (F0) to (G0);
\draw[dashed] (F1) to (G1);
\draw[dashed] (F2) to (G2);
\draw[dashed] (F3) to (G3);
\draw[dashed] (F4) to (G4);
\draw[dashed] (F5) to (G5);
\draw[dashed] (F6) to (G6);
\draw[dashed] (F7) to (G7);

\draw (D0) to [bend right] node [above]{} (G0);
\draw (D1) to [bend left] node [above]{}(G1);
\draw (D2) to [bend left] node [above]{}(G2);
\draw (D3) to [bend left] node [above]{}(G3);
\draw (D4) to [bend left] node [above]{}(G4);
\draw (D5) to [bend left] node [above]{}(G5);
\draw (D6) to [bend left] node [above]{}(G6);
\draw (D7) to [bend left] node [above]{}(G7);
\end{tikzpicture}
\caption{$n$-$\rm PDR$s for $\mathbb{Z}_2^3$ with $n\geq 3$}\label{Fig3}
\end{figure}

It is readily seen that
\begin{itemize}
\item $\G[G_0 \cup G_1]$ and $\G[G_1 \cup G_2]$ are both a union of four directed cycles of length $4$,
\item $\G[G_i \cup G_{i+1}]$ is a perfect matching for every $i\in \mz_n\setminus\{0,1\}$, and
\item in all other cases, $\G[G_j \cup G_k]$ is the empty graph.
\end{itemize}
From this, it follows that $A$ fixes $G_1$ and $G_0\cup G_2$ setwise. Note that, if $\a\in A$ interchanges $1_0$ and $1_2$, then it interchanges the directed cycles $(1_0,a_1,(ab)_0,(ab)_1)$ and $(1_2,c_1,(ac)_2,a_1)$, which is impossible. Then, arguing as in the case $G\cong\mz_2^2$, we obtain  that $A_{1_0}$ fixes  each of $G_0$, $G_1$ and $G_2$ setwise. Using Figure~\ref{Fig3}, we see that the only automorphism of $\G[G_0\cup G_1]\cup\G[G_1\cup G_2]$ fixing setwise $G_0$, $G_1$ and $G_2$ and fixing $1_0$ is the identity. Therefore $A_{1_0}$ fixes $G_0\cup G_1\cup G_2$ pointwise. From this it follows that $A_{1_0}$ fixes $G_i$ pointwise, for every $i\in \mz_n$. Thus $A=GA_{1_0}=G$ and $\Gamma$ is an $n$-PDR of $G$. \end{proof}

\section{Proof of Theorem~\ref{theo=main}}

In this section, we prove Theorem~\ref{theo=main} and, to do that, we need one more auxiliary result.

\begin{lem}\label{lem=HDR}
If the finite group $G$ admits a $2$-$\mathrm{PDR}$, then $G$ admits an $n$-$\mathrm{PDR}$ for every $n\geq 2$.
\end{lem}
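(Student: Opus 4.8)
The plan is to take the hypothesised $2$-PDR and ``wrap it around a cycle of parts'', placing a copy of it on every edge of the cycle so that regularity is automatic, and then to destroy the rotational symmetry of this cycle by a single controlled reversal. Write the given $2$-PDR as $\Lambda=\Cay(G,P,Q)$ with $P:=T_{0,1}$ and $Q:=T_{1,0}$, so that $\Aut(\Lambda)=G$ acts with the two orbits $G_0,G_1$ and fixes each of them setwise. Before constructing $\Gamma$ I would record two facts about $\Lambda$: first, its common in- and out-valency $d:=|P|=|Q|$ satisfies $d\ge 2$, since a valency-$1$ bipartite Cayley digraph is a disjoint union of directed cycles and so has automorphism group strictly larger than $G$ whenever $G\neq\mz_1$; second, because $\Aut(\Lambda)=G$ fixes $G_0$ and $G_1$ setwise, no automorphism of $\Lambda$ interchanges its two parts.

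For the base case $n=2$ there is nothing to prove, so assume $n\ge 3$. I would build $\Gamma$ on the parts $G_0,\dots,G_{n-1}$ by setting $T_{i,i+1}:=P$ and $T_{i+1,i}:=Q$ for $0\le i\le n-2$, reversing the last edge by setting $T_{n-1,0}:=Q$ and $T_{0,n-1}:=P$, and putting $T_{j,k}:=\emptyset$ for all non-consecutive pairs $\{j,k\}$. Every part then lies on exactly two edges, each carrying a copy of $\Lambda$ and contributing in- and out-valency $d$, so $\Gamma$ is a regular $n$-partite Cayley digraph of $G$ of valency $2d$, and it only remains to show $\Aut(\Gamma)=G$. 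It is worth stressing that the requirement of a common edge-valency $d$ is exactly what forces us to use copies of $\Lambda$ rather than perfect matchings as in Lemmas~\ref{lem=small} and~\ref{lem=2^2,2^3}: attaching matchings of valency $1$ to a rigidifying core of valency $d\ge2$ can never produce a regular digraph, which is why the present situation genuinely differs from the small cases.

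For the rigidity I would argue in three stages, following the template of Lemmas~\ref{lem=small} and~\ref{lem=2^2,2^3}. First, the partition $\{G_0,\dots,G_{n-1}\}$ is $\Aut(\Gamma)$-invariant and the induced action factors through the cycle $C_n$: the sub-digraph induced on two parts is empty unless the parts are consecutive, in which case it is the rigid gadget $\Lambda$, so ``being joined by a copy of $\Lambda$'' is an $\Aut(\Gamma)$-invariant adjacency realising $C_n$. Second, I would use the reversal to kill the symmetries of $C_n$: since $\Lambda$ is not self-converse, each gadget carries a well-defined orientation (from its side-$0$ orbit to its side-$1$ orbit), and the chosen placement orients $C_n$ with a unique source $G_0$ (two out-going gadgets) and a unique sink $G_{n-1}$ (two in-coming gadgets); an induced automorphism of $C_n$ must fix this oriented source-to-sink edge, whose stabiliser in $\Aut(C_n)$ is trivial, so $\Aut(\Gamma)$ fixes every part setwise. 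Third, I would invoke the Frattini argument $\Aut(\Gamma)=G\,\Aut(\Gamma)_{v}$ for a vertex $v\in G_0$, observe that $\Aut(\Gamma)_v$ fixes every part setwise, and note that, since $\Aut(\Lambda)=G$ is regular on each part, fixing $v$ pins the gadget on the edge $\{G_0,G_1\}$ pointwise; propagating this around the cycle through the successive gadgets forces $\Aut(\Gamma)_v=1$ and hence $\Aut(\Gamma)=G$.

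The step I expect to be the main obstacle is the symmetry-breaking, precisely because the hypothesis is merely that $G$ admits \emph{some} $2$-PDR. The reversal detects a distinguished source and sink only when $\Lambda$ is a proper digraph, that is when $P\neq Q$; if the only $2$-PDR at hand is self-converse (an undirected Haar graph) the orientation invariant collapses and another marker must be inserted. The clean remedy is to choose $\Lambda$ advantageously: whenever $G$ admits a DRR one may apply Proposition~\ref{prop=CayleySet} and then Proposition~\ref{prop=HDRSet} to manufacture a directed $2$-PDR of the shape $\Cay(G,R\cup\{1\},L\cup\{1\})$ with $L\neq R$, which is never self-converse. This reduces the difficulty to the finitely many groups admitting a $2$-PDR but no DRR, namely $Q_8$, $\mz_2^4$ and $\mz_3^2$ by Propositions~\ref{prop=DRR} and~\ref{prop=HDR}, which I would dispatch by explicit constructions in the spirit of Lemma~\ref{lem=2^2,2^3}. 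By contrast, I expect the invariance of the part-partition and the pointwise propagation to be routine consequences of the rigidity of $\Lambda$, so that essentially all of the technical effort is concentrated in ensuring a non-self-converse choice of $\Lambda$ and in the handful of exceptional groups.
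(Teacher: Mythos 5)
Your construction differs from the paper's, and it has a genuine gap at what you dismiss as the ``routine'' step: the $\Aut(\Gamma)$-invariance of the partition $\{G_0,\dots,G_{n-1}\}$. You assert that the induced sub-digraph on two parts is empty unless they are consecutive, in which case it is a copy of $\Lambda$, and that this realises an invariant copy of $C_n$ --- but this reasoning only applies to automorphisms that are already known to permute the parts. Since $G$ need not be normal in $\Aut(\Gamma)$, all one knows a priori is that the $\Aut(\Gamma)$-orbits are unions of the $G_i$; nothing forces the $G_i$ to form a block system. Worse, your construction is deliberately homogeneous (every edge of the cycle carries the same gadget $\Lambda$, only re-oriented), so every vertex of $\Gamma$ has the same out-valency $2d$, the same in-valency $2d$, and the same number $2|P\cap Q^{-1}|$ of undirected edges. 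There is therefore no local counting invariant available to recover the partition, and ruling out an automorphism that sends part of $G_0$ into $G_1$ would require analysing second neighbourhoods in a way that depends on the particular $\Lambda$. The reversal you introduce breaks the symmetry of the quotient cycle, but only after the quotient exists; it does nothing to establish it.

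The paper's proof is built precisely to avoid this problem: the distinguished edge $\{G_0,G_1\}$ carries the $2$-PDR $\Cay(G,R\cup\{1\},L\cup\{1\})$, whose vertices meet exactly one undirected edge inside that gadget, while all the other edges of the cycle carry auxiliary gadgets (e.g.\ $\Cay(G,R\cup\{1\},R\cup\{b\})$ in Case 1.1) in which every vertex meets $|R|\geq 2$ undirected edges. Hence vertices of $G_0\cup G_1$ see $|R|+1$ undirected edges and all other vertices see $2|R|$, so $G_0\cup G_1$ is fixed setwise by a one-line counting argument; the rigidity of the $2$-PDR then pins down $G_0$ and $G_1$ pointwise in the stabiliser, and this propagates around the cycle. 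If you want to rescue your uniform-gadget construction, you must either perturb one gadget so that some local invariant (undirected-edge valency is the natural one) distinguishes a base pair of parts, or supply an actual argument that the partition is a block system --- and the latter does not follow from the rigidity of $\Lambda$ alone. Your reduction of the self-converse difficulty via Propositions~\ref{prop=CayleySet} and~\ref{prop=HDRSet}, and the separate treatment of $Q_8$, $\mz_2^4$, $\mz_3^2$, do parallel the paper's case division and are sound, but they address a secondary issue, not the missing one.
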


\begin{proof}
Let $G$ be a finite group admitting a $2$-PDR. By Proposition~\ref{prop=HDR},
\begin{equation}\label{eq:new1}
G\ncong \mz_1,\mz_2,\mz_2^2,\mz_2^3 \hbox{ and }\mz_3.
\end{equation} In particular, $|G|\geq 4$.
We consider two cases in turn: first $G$ admits a DRR, then  $G$ admits no DRR.

\vskip 0.2cm
\noindent\textsc{\bf Case 1: }$G$ admits a DRR.
\vskip 0.2cm

By Propositions~\ref{prop=CayleySet} and~\ref{prop=HDRSet}, $G$ has subsets $R$ and $L$ such that   $\Cay(G,R)$ is a DRR and $\Cay(G,R\cup\{1\},L \cup\{1\})$ is a $2$-PDR, where $1\notin R$, $|R|<(|G|-1)/2$, $L\subseteq G\setminus (R^{-1}\cup\{1\})$ and $|L|=|R|$. We now divide the proof in two further cases, depending on whether $G$ is an elementary abelian $2$-group or not.

\vskip 0.2cm
\noindent\textsc{\bf Case 1.1: }$G$ is an elementary abelian $2$-group.
\vskip 0.2cm

Since $\Cay(G,R)$ is a DRR and $|G|>2$, $\Cay(G,R)$ is connected and hence $G=\langle R\rangle$. In particular, $|R|\geq 2$. From~\eqref{eq:new1}, we have $G\cong \mz_2^k$ with $k\geq 4$ and, since $|R|<(|G|-1)/2$, $G$ has a non-identity element $b\in G\setminus R$. Let $\G$ be the $n$-Cayley digraph $\Cay(G,T_{i,j}: i,j\in\mz_n)$, where
\begin{eqnarray}\label{eq1}
T_{i,i+1}&:=&R\cup\{1\} \mbox{ for } i\in\mz_n;\ \  T_{1,0}:=L\cup\{1\},\ T_{i+1,i}:=R\cup\{b\} \mbox{ for } i\in\mz_n\setminus\{0\};\\
\label{eq2}
T_{j,k}&:=&\emptyset\ \mbox{ for }\ j,k\in\mz_n \mbox{ with } j\not=k\pm 1.
\end{eqnarray}

It is readily seen that $\G$ is a regular $n$-partite Cayley digraph. To prove that $\G$ is an $n$-PDR, it suffices to show that $A:=\Aut(\G)=G$. Note that $R=R^{-1}$ and $L=L^{-1}$,  because $G$ is an elementary abelian $2$-group.

Let $i,j\in\mz_n$. Observe that,
\begin{itemize}
\item when $i\not=j\pm 1$, $\G[G_i \cup G_j]$ is the empty graph (this follows from~\eqref{eq2}),
\item every vertex in $\Gamma[G_0\cup G_1]$ is adjacent to only one undirected edge (this follows from~\eqref{eq1}, because $T_{0,1}^{-1}\cap T_{1,0}=\{1\}$),
\item when $i\not=0$, every vertex
in $\Gamma[G_i\cup G_{i+1}]$ is adjacent to $|R|$ undirected edges (this follows from~\eqref{eq1},
because $T_{i,i+1}^{-1}\cap T_{i+1,i}=R$).
\end{itemize}
It follows that every vertex in $G_0$ and $G_1$ is incident to
$|R|+1$ undirected edges and, when $i\in \mz_n\setminus\{0,1\}$, every vertex in $G_i$  is incident to $2|R|$ undirected edges. Since $|R|\geq 2$, we have $|R|+1\ne 2|R|$ and hence $A$ fixes $G_0\cup G_1$ setwise. Therefore $A$ induces a group of automorphisms on $\G[G_0\cup G_1]$.

Note that $$\G[G_0\cup G_1]=\Cay(G,T_{0,1},T_{1,0})=\Cay(G,R\cup\{1\},L\cup\{1\}).$$ Since  $\Cay(G,R\cup\{1\},L \cup\{1\})$ is a $2$-PDR, the automorphism group of $\G[G_0\cup G_1]$ is $G$ and hence $A$ fixes $G_0$ and $G_1$ setwise and $A_{1_0}$ fixes $G_0$ and $G_1$ pointwise.

From~\eqref{eq1},  $T_{1,2}^{-1}\cap T_{2,1}=R$ and $T_{1,2}=R\cup\{1\}$. It follows that,
 for every $g_1\in G_1$, there exists a unique $h_2\in G_2$ such that $(g_1,h_2)$ is an arc of $\G[G_1\cup G_{2}]$ and $(h_1,g_2)$ is not an arc of $\G$. From this observation and from the fact that $A_{1_0}$ fixes $G_1$ pointwise, we have that $A_{1_0}$ fixes $G_2$ pointwise. Now, when $3\leq i\leq n-1$, as $T_{i,i+1}=R\cup\{1\}$ and $T_{i+1,i}=R\cup \{b\}$, with an entirely similar argument and with an elementary induction, we get that $A_{1_0}$ fixes $G_i$ pointwise. Therefore, $A_{1_0}=1$ and $A=GA_{1_0}=G$.

\vskip 0.2cm
\noindent\textsc{\bf Case 1.2: }$G$ is not an elementary abelian $2$-group.
\vskip 0.2cm

Let $a$ be an arbitrary element of $G$ having order $o$ at least $2$. Recall that $1\notin R$, $|R|<(|G|-1)/2$, $L\subseteq G\setminus (R^{-1}\cup\{1\})$ and $|L|=|R|$. Thus, $|R|+|L|<|G|-1$. From this we deduce that $G$ has three subsets $S$, $W$ and $K$ such that
\begin{itemize}
\item $|S|=|R|+1$ with  $1,a\in S$;
\item $W\subseteq G\setminus S^{-1}$ with $|W|=|R|-1$,
\item $K:=W\cup\{1,a^{-1}\}.$
\end{itemize}
Observe that $|R|+1=|L|+1=|S|=|K|$ and $S^{-1}\cap K=\{1,a^{-1}\}$. Set
$$T_{0,1}:=R\cup\{1\}, T_{1,0}:=L\cup\{1\};\ T_{i,i+1}:=S,\ T_{i+1,i}:=K, T_{j,k}:=\emptyset,\ i,j,k\in\mz_n \mbox{ with } i\not=0, \ j\not=k\pm 1.$$

Let $\G:=\Cay(G,T_{i,j}:i,j\in \mz_n)$ and let $A:=\Aut(\G)$. Then $\G$ is a regular $n$-partite Cayley digraph of $G$. Furthermore,
\begin{itemize}
\item when $i\not=j\pm 1$, $\Gamma[G_i\cup G_j]$ is the empty graph,
\item when $i\not=0$, every vertex in $\Gamma[G_i\cup G_{i+1}]$ has two undirected edge (because
$T_{i,i+1}\cap T_{i+1,i}^{-1}=S\cap K^{-1}=\{1,a\}$),
\item every vertex in $\Gamma[G_0\cup G_1]$ has only one undirected edge (because $T_{0,1}\cap T_{1,0}^{-1}=\{1\}$).
\end{itemize}
It follows that every vertex in $G_0$ and $G_1$ is incident to $3$ undirected edges and, when $i\in\mz_n\setminus\{0,1\}$, every vertex in $G_i$ is incident to $4$ undirected edges. Thus, $A$ fixes $G_0\cup G_1$ setwise. Since  $\G[G_0\cup G_1]=\Cay(G,R\cup\{1\},L\cup\{1\})$ is a $2$-PDR, $A$ fixes $G_0$ and $G_1$ setwise and  $A_{1_0}$ fixes $G_0$ and $G_1$ pointwise.

Since $T_{1,2}\cap T_{2,1}^{-1}=\{1,a\}$, every vertex in $G_1$ has two undirected edges with the other ends in $G_2$ and, similarly, every vertex in $G_2$ has two undirected edges with the other ends in $G_1$.  Therefore, all edges in $\G[G_1\cup G_2]$ consist of cycles of length $2o$. Since $o\geq 3$ and $A_{1_0}$ fixes $G_1$ pointwise, $A_{1_0}$ fixes $G_2$ pointwise. Similarly, since $T_{i,i+1}\cap T_{i+1,i}^{-1}=\{1,a\}$ for every $i\in\mz_n\setminus\{0\}$, we have that $A_{1_0}$ fixes $G_i$ pointwise for every $i\in \mz_n$. It follows that $A_{1_0}=1$ and $A=GA_{1_0}=G$, that is, $\G$ is an $n$-PDR.

\vskip 0.2cm
\noindent\textsc{\bf Case 2: } $G$ admits no DRR.
\vskip 0.2cm
Recall that $G\ncong \mz_1,\mz_2,\mz_2^2,\mz_2^3$ and $\mz_3$. By Proposition~\ref{prop=DRR}, $G\cong \mz_2^4$, $Q_8$, or $\mz_3^2$. First we claim that $G$ has three subsets $R$, $L$ and $K$ such that
\begin{itemize}
\item  $\Cay(G,R,L)$ is a $2$-PDR,
\item $|R\cap L^{-1}|=1$ and $|R|=|L|\geq 3$,
\item $|K|=|R|=|L|$ and $|R\cap K^{-1}|=|R|-1$.
\end{itemize}
Indeed, when $G=\langle a\rangle\times\langle b\rangle\times\langle c\rangle\times \langle d\rangle\cong \mz_2^4$, take
$$R:=\{1,a,b,c,d,ad\},\, L:=\{1,ac,bc,abc,abd,bcd\}\hbox{ and }K:=\{ab\}\cup(R^{-1}\setminus\{1\}),$$
when $G=\langle a,b~|~a^4=b^4=1,b^2=a^2,a^b=a^{-1}\rangle\cong Q_8$, take
$$R:=\{1,a,b\},\,L:=\{a^2,b^{-1},ab\}\hbox{ and }K:=\{ab\}\cup(R^{-1}\setminus\{1\}),$$
when $G=\langle a\rangle\times\langle b\rangle\cong \mz_3^2$, take
$$R:=\{1,a,b\},\,L:=\{a,b^{-1},ab\}\hbox{ and }K:=\{ab\}\cup(R^{-1}\setminus\{1\}).$$
Now, an elementary computation with the computer algebra system \textsc{Magma}~\cite{magma} reveals that with these choices of $R$, $L$ and $K$ all three conditions above are satisfied.


To finish the proof, it suffices to show that $G$ admits an $n$-PDR for every $n\geq 3$. Set
$$T_{0,1}:=R,\ T_{1,0}:=L;\ T_{i,i+1}:=R,\ T_{i+1,i}:=K, \ T_{j,k}:=\emptyset,\ i,j,k\in\mz_n \mbox{ with } i\not=0,\ j\not=k\pm 1.$$
Let $\G=\Cay(G,T_{i,j}:i,j\in \mz_n)$. Then every vertex in $\Gamma[G_0\cup G_1]$ is incident to exactly one undirected edge and, for $i\not=0$, every vertex
in $\Gamma[G_i\cup G_{i+1}]$ is incident to exactly $|R|$ undirected edges and one directed edge. This yields that the automorphism group of $\Gamma$ fixes setwise $G_0\cup G_1$. Now, to conclude the proof we argue as in Case~1.1 and we obtain that $\G$ is an
$n$-PDR. \end{proof}

\begin{proof}[Proof of Theorem~$\ref{theo=main}$] From~\eqref{boundn}, we have $n\ge 3$. To finish the proof, it suffices to show that $G$ admits  no $n$-PDR if and only if $3\leq n\leq 5$ and $G=\mz_1$.

The sufficiency follows from Lemma~\ref{lem=small}. To prove the necessity, by Lemma~\ref{lem=HDR}, we may assume that $G$ admits no $2$-PDR. Now, by Proposition~\ref{prop=HDR}, $G\cong \mz_1,\mz_2,\mz_2^2,\mz_2^3$ or $\mz_3$. By Lemmas~\ref{lem=small} and \ref{lem=2^2,2^3}, $G\cong\mz_1$ and $3\leq n\leq 5$.
\end{proof}

\f {\bf Acknowledgement:} This work was supported by the National Natural Science Foundation of China (11731002), the 111 Project of China (B16002), the Natural Science Foundation of Jiangsu Province, China (BK20200627) and by the China Postdoctoral Science Foundation (2021M693423).

\end{document}